\newcounter{nalg} 
\renewcommand{\thenalg}{\arabic{nalg}} 
\newtheorem{theorem}{Theorem}
\newtheorem{proposition}[theorem]{Proposition}
\newtheorem{corollary}[theorem]{Corollary}
\theoremstyle{definition}
\newtheorem{definition}[theorem]{Definition}
\newcommand{\BG}[1][U\sqcup V, E]{G=(#1)}
\newcommand{\arc}[1][uv]{\overrightarrow{#1}}
\newcommand{\Nodes}{U\sqcup V}
		\title{The equivalence between two classic algorithms for the assignment problem}
		\author{Carlos~A.~Alfaro
		\thanks{Banco de M\'exico, Mexico City, Mexico (carlos.alfaro@banxico.org.mx)}
		\and Sergio~L.~Perez
		\thanks{Banco de M\'exico, Mexico City, Mexico (sergio.perez@banxico.org.mx)}
		\and Carlos~E.~Valencia 
		\thanks{Departamento de Matem\'aticas, CINVESTAV del IPN, Apartado postal 14-740, 07000 Mexico City, Mexico (cvalencia@math.cinvestav.edu.mx)}
		\and Marcos~C.~Vargas 
		\thanks{Banco de M\'exico, Mexico City, Mexico (marcos.vargas@banxico.org.mx)}
		}
\begin{document}
	
	\maketitle
	
		\begin{abstract}
			We give a detailed review of two algorithms that solve the minimization case of the assignment problem.
			The Bertsekas' auction algorithm and the Goldberg \& Kennedy algorithm.
			We will show that these algorithms are equivalent in the sense that both perform equivalent steps in the same order.
			We also present experimental results comparing the performance of three algorithms for the assignment problem.
			They show the auction algorithm 	performs and scales better in practice than algorithms that are harder to implement but have better theoretical time complexity.
		\end{abstract}

	\section{Introduction}
	The assignment problem\index{assignment problem} can be stated as follows.
	We have a bipartite graph $G=(U\sqcup V, E)$, with $|U|=|V|$,
	and an \textit{integer weight function}\index{integer weight function}
	over the edges of $G$ given by $w:E\rightarrow \mathbb{Z}$.
	The objective is to find a perfect matching of $G$ of minimum weight.
	We call the pair $\{G, w\}$ an \textit{instance of the assignment problem}\index{instance of the assignment problem},
	or equivalently an \textit{integer weighted bipartite graph}\index{integer weighted bipartite graph}.
	A perfect matching of minimum weight is called an \textit{optimum matching}\index{optimum matching}.
	If the graph has no perfect matchings, then the problem is \textit{infeasible}.
	
	The assignment problem is important from a theoretical point of view because it appears as a subproblem of a vast number of combinatorial optimization problems, and its solution allows the development of algorithms to solve other combinatorial optimization problems.
	
	The roots of the assignment problem can be traced back to the 1920's studies on matching problems.
	And more remarkably by the 1935 marriage theorem of Philip Hall.
	The assignment problem can be modeled as a linear program, with the property that its associated polyhedron has all the vertices integer valued.
	Therefore, this problem can be solved using general linear programming techniques.
	The problem with these techniques is that they do not perform well in practice.
	This has pushed the development of specialized algorithms that exploit the particular structure of the assignment problem.
	For instance, Kuhn proposed in \cite{hung_kuhn1} the first polynomial time algorithm for solving the assignment problem.
	Since then, the assignment problem has been deeply studied, see for instance \cite{hung_kuhn2,asym_auct1,asym_auct2,hybr_auct1,hung_kuhn1,hung_kuhn2,gold_ken_imp1}.
	For a more detailed account of this fascinating topic, we refer the reader to \cite{assignment_problems_book}.
	The \textit{$\epsilon$-scaling auction algorithm} \cite{auctionA_Bert1} and the \textit{Goldberg \& Kennedy algorithm} \cite{Gold_ken2} also solve the assignment problem and for long time they have been considered as different algorithms, for instance see Section 4.1.3 of \cite{assignment_problems_book}.
	However, in this paper it will be shown that they are equivalent in the sense that one can be transformed into the other by means of memory optimization.
	
	Through the paper, we will use the following notation: $m=|E|$, $n=|U|=|V|$,
	and $W=\max_{uv\in E} |w(uv)|$.
	The theoretical time complexity of these two algorithms is not the best currently known.
	The best current time complexity for the assignment problem is $O(\sqrt{n}\,m\log (nW))$ proposed by Gabow \& Tarjan \cite{gabo_tarj1},
	while the time complexity for the $\epsilon$-scaling auction algorithm is $O(n\,m\log (nW))$,
	as well as for the Goldberg \& Kennedy algorithm.
	In our experience, the auction algorithm performs a much better in practice than other algorithms with equal or better theoretical time complexity, including the Goldberg \& Kennedy algorithm.
	
	The $\epsilon$-scaling auction algorithm has been modified onto several variants to directly solve related problems such as \textit{shortest path} \cite{auctionA_Bert4}, \textit{min-cost flow} (see \cite{auctionA_Bert2} p. 17, \cite{auctionA_Bert1}), \textit{assignment}, \textit{transportation} \cite{auctionA_Bert1} and many others.
	An important advantage of the auction algorithm is that it can be implemented using parallel computation \cite{paral_auction1}.
	It is important to remark that the auction algorithm only works with balanced instances, that is, $|U|=|V|$.
	
	The $\epsilon$-scaling auction algorithm operates like a real auction, where a set of persons $U$, compete for a set of objects $V$.
	In this scenario, to each object is assigned a price which, in certain sense, represents how important is the object for the persons.
	The optimization process is done in a competitive bidding, where the prices of
	the objects are properly reduced in order to make the desired object of a person less desirable to the other persons.
	
	On the other side, the Goldberg \& Kennedy algorithm is based on network flow techniques.
	The algorithm transforms the problem into a
	minimum-cost flow problem, and aims to build an optimum flow on a couple of auxiliary digraphs.
	Once we get an optimum flow, the induced optimum matching is easily obtained.

	\section{The $\epsilon$-scaling Auction algorithm}\label{auctionminimization}
	
	In the $\epsilon$-scaling auction algorithm every object $v$ has a price $p(v)$.
	This set of prices can be seen as a price function over $V$ defined by
	$p:V\rightarrow \mathbb{R}$.
	For every edge $uv\in E$ we define the \textit{reduced cost} $w'(uv)$ of job $v$ for the person $u$ as $w(uv) - p(v)$.
	
	The objective in the auction algorithm is to find prices $p(v)$ for the jobs and a perfect matching $M$ such that every person is assigned to the job that has almost the minimum reduced cost that the person can get.
	This condition is expressed in the following definition.
	\begin{definition}
		Given $\epsilon>0$.
		A set of prices $p$ and a perfect matching $M$ are said to satisfy the
		\textit{$\epsilon$-Complementary Slackness condition},
		\index{$\epsilon$-Complementary Slackness Condition}
		or \textit{$\epsilon$-CS condition}\index{$\epsilon$-CS condition} for short,
		if they satisfy:
		$$w'(uv) \leq \min_{z\in N(u)} w'(uz) + \epsilon, \qquad \forall uv\in M.$$
		\label{eps_cs_cond_def}
	\end{definition}
	
	The following theorem shows the reason of why this condition is important.
	\begin{theorem}
		Let $M^*$ be a perfect matching of minimum weight and $\epsilon>0$.
		If a perfect matching $M$ satisfies the $\epsilon$-CS condition
		with a set of prices $p$, then $w(M^*)\leq w(M)\leq w(M^*)+n\epsilon$.
		\label{n_eps_bound}
	\end{theorem}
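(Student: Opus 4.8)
The plan is to dispatch the two inequalities separately. The left inequality $w(M^*)\leq w(M)$ is immediate: since $M^*$ is a perfect matching of minimum weight and $M$ is merely some perfect matching, the optimality of $M^*$ gives $w(M^*)\leq w(M)$ at once. All the work lies in the upper bound $w(M)\leq w(M^*)+n\epsilon$.

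For that bound I would work with the reduced costs and exploit that both $M$ and $M^*$ are perfect matchings, so that every object $v\in V$ is covered exactly once by each of them. First I would sum the $\epsilon$-CS inequality of Definition~\ref{eps_cs_cond_def} over all $n$ edges $uv\in M$. Substituting $w'(uv)=w(uv)-p(v)$ and using perfectness of $M$, the left-hand side collapses to $w(M)-\sum_{v\in V}p(v)$, while the right-hand side becomes $\sum_{u\in U}\min_{z\in N(u)}w'(uz)+n\epsilon$, since $M$ contributes exactly $n$ copies of the slack $\epsilon$.

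Next I would bound the minimum-reduced-cost term by comparing against the optimal matching. For each person $u\in U$, let $z_u$ denote the object with $uz_u\in M^*$; then trivially $\min_{z\in N(u)}w'(uz)\leq w'(uz_u)=w(uz_u)-p(z_u)$. Summing over all $u\in U$ and using that $M^*$ is perfect, so each object's price is subtracted exactly once, yields $\sum_{u\in U}\min_{z\in N(u)}w'(uz)\leq w(M^*)-\sum_{v\in V}p(v)$.

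Combining the two estimates, the price total $\sum_{v\in V}p(v)$ appears on both sides and cancels, leaving precisely $w(M)\leq w(M^*)+n\epsilon$. The only point requiring care — and really the whole content of the argument — is this cancellation: the perfectness of \emph{both} matchings is what forces each object's price to enter each sum exactly once, so the price terms annihilate. Beyond that bookkeeping I do not expect any genuine obstacle, as the remainder is a routine telescoping of inequalities in the spirit of LP weak duality.
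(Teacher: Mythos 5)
Your proposal is correct and follows essentially the same argument as the paper: both add and cancel the total price $\sum_{v\in V}p(v)$, use the $\epsilon$-CS condition to bound each reduced cost $w'(uv)$ for $uv\in M$ by $w'(uz_u)+\epsilon$ where $z_u$ is $u$'s partner in $M^*$, and invoke perfectness of both matchings so the price sums coincide. The only difference is bookkeeping order (you sum the $\epsilon$-CS inequality first and then compare to $M^*$, while the paper does both in one chain), which is not a substantive distinction.
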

	\begin{proof}
		Part $w(M^*)\leq w(M)$ follows because $M^*$ is optimum. In the other hand, from the definition of $\epsilon$-CS condition follows that:
		\[w(M) = \sum_{uv\in M} w(uv) = \sum_{uv\in M} w(uv) - \sum_{v\in V} p(v) + \sum_{v\in V} p(v)
		= \sum_{uv\in M} (w(uv) - p(v)) +  \sum_{v\in V} p(v)\]
		\[\leq  \sum_{uv\in M^*} (w(uv) - p(v) + \epsilon) +  \sum_{v\in V} p(v)
		= \sum_{uv\in M^*} w(uv) + \sum_{uv\in M^*} \epsilon = w(M^*) + n\epsilon.\]
	\end{proof}
	Since we are dealing with integer-weighted bipartite graphs, the following corollary shows how to obtain an optimum matching via the $\epsilon$-CS condition.
	
	\begin{corollary}
		If a perfect matching $M$ and a set of prices $p$ satisfy the $\epsilon$-CS condition
		for $\epsilon < \frac{1}{n}$, then $M$ is of minimum weight.
		\label{opt_mat_eps}
	\end{corollary}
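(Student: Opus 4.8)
The plan is to invoke Theorem~\ref{n_eps_bound} directly and then close the gap using integrality of the weights. Since $M$ and the prices $p$ satisfy the $\epsilon$-CS condition, the theorem immediately gives the two-sided bound $w(M^*) \leq w(M) \leq w(M^*) + n\epsilon$, where $M^*$ is an optimum matching. The left inequality already asserts that $M$ is no better than the optimum, so the entire content of the corollary reduces to upgrading the right inequality into $w(M) \leq w(M^*)$.

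First I would substitute the hypothesis $\epsilon < \tfrac{1}{n}$ into the upper bound. This yields $n\epsilon < 1$, and hence $w(M) < w(M^*) + 1$. The key observation I would make next is that both $w(M)$ and $w(M^*)$ are sums of integer edge weights, since $w:E\to\mathbb{Z}$; therefore $w(M)$ and $w(M^*)$ are integers. A strict inequality $w(M) < w(M^*) + 1$ between integers forces $w(M) \leq w(M^*)$.

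Combining this with the optimality bound $w(M^*) \leq w(M)$ from the same theorem gives $w(M) = w(M^*)$, so $M$ attains the minimum weight and is therefore an optimum matching, as claimed.

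I do not anticipate a genuine obstacle here: the result is essentially a corollary of Theorem~\ref{n_eps_bound} together with the integrality assumption built into the problem statement. The only point requiring care is to state explicitly \emph{why} $w(M)$ and $w(M^*)$ are integers (namely, that they are finite sums of values of the integer weight function $w$), since the strict-to-nonstrict passage on integers is exactly what converts the approximate optimality guaranteed by the $\epsilon$-CS condition into exact optimality.
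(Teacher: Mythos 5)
Your proof is correct and follows exactly the paper's own argument: apply Theorem~\ref{n_eps_bound}, use $\epsilon < \tfrac{1}{n}$ to get $w(M) < w(M^*)+1$, and conclude by integrality of the weights. The extra care you take in justifying why $w(M)$ and $w(M^*)$ are integers is a harmless elaboration of the same reasoning.
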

	\begin{proof}
		Let $w(M^*)$ be the optimum weight.
		Theorem \ref{n_eps_bound} implies that
		$w(M^*)\leq w(M) < w(M^*) + 1$.
		Since the weights are integral, then $w(M) = w(M^*)$.
	\end{proof}
	
	The pseudo-code given in Algorithm \ref{auction_01} shows how to obtain a perfect matching and a set of
	prices that satisfy the $\epsilon$-CS condition for a given $\epsilon>0$.
	If the given instance has perfect matchings, then the procedure always terminates
	with the correct output as we will see later.
	It is important to remark that if the instance has no perfect matchings then the
	procedure will fall into an infinite loop.
	This algorithm is called the \textit{auction algorithm}.
	
	\begin{algorithm}[caption={The auction algorithm}, label={auction_01}]
		input: an instance $\{G,w\}$, $\epsilon>0$ and prices $p$.
		output: a perfect matching $M$ and prices $p$ satisfying the $\epsilon$-CS condition.
		
		procedure auction($G,w,\epsilon,p$)
		$\qquad$$M=\phi$
		$\qquad$while $|M|<n$ do
		$\qquad$$\qquad$take an unassigned $u \in U$
		$\qquad$$\qquad$bid(u)
		$\qquad$end
		$\qquad$return($M,p$)
		end
		
		procedure bid($u$)
		$\qquad$Let $uv$ and $uz$ be the edges with the smallest and the second smallest reduced costs, respectively.
		$\qquad$if($v$ is assigned to some $u' \in U$)
		$\qquad$$\qquad$remove $u'v$ from $M$
		$\qquad$append $uv$ to $M$
		$\qquad$$\gamma=w'(uz)-w'(uv)$
		$\qquad$$p(v)=p(v)-\gamma - \epsilon$
		end
	\end{algorithm}
	
	Note that we allow the procedure to receive initial prices $p$.
	Such initial prices have no particular restrictions, they can be any real values.
	The algorithm will automatically adjust them after each iteration and will end up with the correct output as the following proposition shows,
	which is proven in \cite{auctionA_Bert1}.
	However, as we will see later, the initial prices have a big impact on the running time of the procedure.

	\begin{proposition}
		If the auction procedure, described in Algorithm \ref{auction_01}, is applied to a feasible instance of the assignment problem,
		then the procedure will terminate after a finite number of
		iterations and will return a matching and a set of prices that satisfy the $\epsilon$-CS condition,
		regardless of the initial prices.
		\label{auct_terminates_prop}
	\end{proposition}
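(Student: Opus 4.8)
The plan is to split the statement into two independent claims: (i) \emph{correctness}, that whatever matching and prices the procedure returns satisfy the $\epsilon$-CS condition, and (ii) \emph{finite termination}. Claim (i) I would establish as a loop invariant maintained by \texttt{bid}, while (ii) is the genuinely hard part and will rest on a monotonicity-plus-counting argument that finally invokes feasibility through Hall's condition.

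For the correctness invariant, I would show that after every execution of \texttt{bid} the $\epsilon$-CS condition of Definition \ref{eps_cs_cond_def} holds for every edge currently in $M$. The base case is vacuous since $M=\phi$. For the inductive step, consider a bid by $u$ on the cheapest object $v$, with $z$ the second cheapest. A direct computation shows that the update $p(v)\leftarrow p(v)-\gamma-\epsilon$ raises the reduced cost of $uv$ to exactly $w'(uz)+\epsilon$, so that $w'(uv)=\min_{s\in N(u)}w'(us)+\epsilon$ and the new edge satisfies the condition with equality. For any previously matched edge $u'v'$ I would observe that lowering $p(v)$ can only increase $w'(u'v)$, hence can only raise the right-hand side $\min_{s}w'(u's)+\epsilon$, so the condition is preserved; edges incident to objects whose price is untouched are unaffected. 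Thus the invariant holds throughout, and in particular at termination when $|M|=n$.

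For termination I would first record three monotonicity facts: each bid lowers the price of the object bid on by at least $\epsilon$; an object, once assigned, stays assigned (a bid may only change its owner); and $|M|$ is nondecreasing. Assuming for contradiction that the loop never ends, $|M|$ must stabilize at some $k<n$, after which every bid falls on an already-assigned object. I would then define $V_\infty$ as the set of objects bid on infinitely often, whose prices diverge to $-\infty$, and $U_\infty$ as the set of persons that bid infinitely often, both nonempty by pigeonhole. The heart of the argument is to prove $N(U_\infty)\subseteq V_\infty$: once the prices of the objects in $V_\infty$ are sufficiently negative their reduced costs exceed those of every object outside $V_\infty$ (whose prices are eventually constant), so a person in $U_\infty$ can be selecting a cheapest object inside $V_\infty$ only if it has no neighbor outside $V_\infty$ at all.

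The main obstacle, and the final step, is turning this into a violation of feasibility. I would argue that in the tail every object of $V_\infty$ is permanently assigned yet loses its owner infinitely often, so each of its successive owners lies in $U_\infty$; hence at any late moment the $|V_\infty|$ objects are held by $|V_\infty|$ distinct members of $U_\infty$, while the person currently bidding is an additional, unassigned member of $U_\infty$. This forces $|U_\infty|\geq |V_\infty|+1>|N(U_\infty)|$, contradicting Hall's condition, which must hold because the instance is feasible. The delicate points I expect to spend the most care on are the uniform choice of a time after which all bids land in $V_\infty$ and no bid can land on a still-unassigned object, and the verification that displaced owners of $V_\infty$-objects genuinely re-enter the bidding rather than escaping to a stable object, since both are what make the strict inequality $|U_\infty|>|N(U_\infty)|$ airtight.
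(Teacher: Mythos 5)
The paper does not actually prove Proposition \ref{auct_terminates_prop}; it defers entirely to \cite{auctionA_Bert1}, and your argument is precisely the standard one from that reference: the $\epsilon$-CS loop invariant preserved by \textit{bid} (the new matched edge meets the bound with equality, and lowering $p(v)$ only relaxes the bound for other matched edges), followed by termination via the sets $U_\infty$, $V_\infty$, the divergence of prices on $V_\infty$, the inclusion $N(U_\infty)\subseteq V_\infty$, and the count $|U_\infty|\ge|V_\infty|+1$ contradicting Hall's condition. The proposal is correct and complete in outline; the two delicate points you flag are resolved exactly as you suggest, by fixing a time after which only $U_\infty$-persons bid and every object of $V_\infty$ has been re-bid at least once, so that every $V_\infty$-object is then held by a distinct member of $U_\infty$ while the current bidder supplies the extra one.
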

	
	It turns out that if the initial prices $p$ are random and $\epsilon$ is very close to 0,
	then the resulting matching will be optimum or close to optimum, but the
	running time will be huge, and if $\epsilon$ is large then the
	running time will be small but the matching will be far from being optimum.
	However, if the initial prices have a structure close to the $\epsilon$-CS condition,
	then the running time is proven to be $O(nm)$ \cite{auctionA_Bert1}.
	The $\epsilon$-scaling auction algorithm exploits this behavior to efficiently
	find a perfect matching $M$ that satisfies the $\epsilon$-CS condition for our small $\epsilon < \frac{1}{n}$.
	The pseudo-code is given in Algorithm \ref{eps_auction_01}, and makes use of the \textit{auction} procedure.
	The scaling factor $\alpha>1$ is a custom parameter and remains constant during the execution.
	
	\begin{algorithm}[caption={min\_epsilon\_scaling\_auction}, label={eps_auction_01}]
		input: a weighted bipartite graph $\{G,w\}$.
		output: a minimum weight perfect matching.
		
		procedure: $\epsilon$_scaling_auction($G,w$)
		$\qquad$$\epsilon=W$
		$\qquad$$p(v)=0$, $\forall v\in V$
		$\qquad$while $\epsilon \geq 1/n$ do
		$\qquad$$\qquad$$\epsilon=\epsilon / \alpha$
		$\qquad$$\qquad$$(M,\, p) =$auction$(G, w, \epsilon,\, p)$
		$\qquad$end
		$\qquad$return $M$
		end
	\end{algorithm}
	
	The idea of the $\epsilon$-scaling auction algorithm is to iteratively find a sequence of pairs $\{M, p\}$
	that satisfy the $\epsilon$-CS condition,
	where the sequence of values for $\epsilon$ is $\{W/\alpha, W/\alpha^2, \\W/\alpha^3, \ldots, W/\alpha^k \}$,
	with $W/\alpha^k < 1/n$.
	Note that the resulting
	prices that satisfy the $\epsilon$-CS condition
	for $\epsilon = W/\alpha^i$ almost satisfy the $\epsilon$-CS condition for the next iteration
	with $\epsilon = W/\alpha^{(i+1)}$.
	
	As we mentioned before, each call to the auction procedure takes $O(nm)$ time.
	And the number of scaling phases is $O(\log(nW))$.
	This gives us a total of $O(nm\log(nW))$ running time for the $\epsilon$-scaling auction
	algorithm. The following corollary follows directly from Proposition \ref{auct_terminates_prop}
	and Corollary \ref{opt_mat_eps}.
	\begin{corollary}
		If the $\epsilon$-scaling auction procedure is applied to a feasible instance of the assignment problem,
		then the procedure will terminate after a finite number of steps and will return an optimum matching.
		\label{eps_auct_terminates_coro}
	\end{corollary}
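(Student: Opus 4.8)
The plan is to assemble the two ingredients already established: that every individual call to the \texttt{auction} procedure terminates and returns prices together with a matching satisfying the $\epsilon$-CS condition (Proposition \ref{auct_terminates_prop}), and that satisfying this condition for $\epsilon < 1/n$ forces optimality (Corollary \ref{opt_mat_eps}). First I would settle termination of the outer loop of Algorithm \ref{eps_auction_01}. Each pass through the \textbf{while} loop begins by replacing $\epsilon$ with $\epsilon/\alpha$; since $\alpha>1$ is fixed, the successive values of $\epsilon$ form the geometric sequence $W/\alpha, W/\alpha^2, W/\alpha^3, \ldots$, which tends to $0$. Hence after finitely many passes — on the order of $\lceil \log_\alpha(nW) \rceil$ of them — the guard $\epsilon \geq 1/n$ fails and the outer loop halts.

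Next I would argue that the total number of steps is finite. The outer loop executes only finitely many times by the previous paragraph, and within each pass the single call to \texttt{auction} is made on the given instance, which is feasible by hypothesis. Proposition \ref{auct_terminates_prop} then guarantees that each such call terminates after finitely many iterations, regardless of the prices handed to it; in particular, the prices inherited from the preceding scaling phase cause no difficulty. A finite number of finite computations is finite, so the whole procedure terminates.

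Finally I would establish optimality of the returned matching. Let $\epsilon^{*}$ denote the value of $\epsilon$ used in the last call to \texttt{auction}, that is, the first value in the sequence above that is strictly smaller than $1/n$; such a value is reached precisely because it is the one on which the guard fails. By Proposition \ref{auct_terminates_prop}, that final call returns a perfect matching $M$ together with prices $p$ satisfying the $\epsilon^{*}$-CS condition. Since $\epsilon^{*} < 1/n$, Corollary \ref{opt_mat_eps} applies and shows that $M$ has minimum weight, and this $M$ is exactly what Algorithm \ref{eps_auction_01} returns.

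The argument is essentially a bookkeeping assembly of earlier results, so I do not expect a serious obstacle. The one point demanding care is the loop accounting: one must verify that the $\epsilon$ driving the \emph{final} auction call is genuinely below $1/n$, rather than merely the value that last passed the guard, since Corollary \ref{opt_mat_eps} requires the strict inequality $\epsilon < 1/n$. Tracking where the division by $\alpha$ occurs relative to the guard test settles this cleanly.
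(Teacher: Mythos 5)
Your proposal is correct and matches the paper's argument, which simply notes that the corollary follows directly from Proposition \ref{auct_terminates_prop} and Corollary \ref{opt_mat_eps}; your additional bookkeeping (finitely many scaling phases since $\alpha>1$, and the observation that the $\epsilon$ used in the final \texttt{auction} call is exactly the value on which the guard fails, hence strictly below $1/n$) correctly fills in the details the paper leaves implicit.
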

	


	\section{Goldberg \& Kennedy algorithm}
	
	The Goldberg \& Kennedy algorithm \cite{Gold_ken2} applies network flow techniques to the assignment problem, which is a special
	case the minimum-cost flow problem.
	Their algorithm is base on the push-relabel technique \cite{gold_tarj1}.
	The complexity of the resulting algorithm is $O(n\,m\,\log(nW))$ and an overview is presented following.
	
	Given a weighted bipartite graph $\{\BG, w:E\rightarrow \mathbb{Z}\}$, it is transformed into an instance of the min-cost flow problem
	$\{\hat{G}=(U\sqcup V, \hat{E}), w, c, d\}$.
	First, $\hat{G}$ is considered as a directed graph with the same vertex set $U\sqcup V$ and the arc set $\hat{E}$ equal to the edges of $E$ but oriented from $U$ to $V$.
	We denote the arc that goes from $u$ to $v$ by $\arc$.
	The {\it capacity function} is given by $c(\arc)=1$, $\forall \arc \in \hat{E}$.
	And the {\it supply function} is given by $d(u)=1$ $\forall u\in U$, and $d(v)=-1$ $\forall v\in V$.
	
	A \textit{pseudoflow} is a function $f:\hat{E} \rightarrow \mathbb{Z}$ such that
	$f(\arc) \leq c(\arc)$ for every arc $\arc\in \hat{E}$.
	We define the \textit{excess flow} of a vertex $x\in \Nodes$ by:
	\begin{equation}
		e_f(x)=d(x) + \sum_{\arc[yx]\in \hat{E}} \; f(\arc[yx]) - \sum_{\arc[xy]\in E} \; f(\arc[xy]).
		\label{excess_flow_0}
	\end{equation}
	A node $v$ satisfying $e_f(v)>0$ is called \textit{active}.
	A \textit{flow} is a pseudoflow with no active nodes. The weight of a pseudoflow is:
	\begin{equation}
		w(f)= \sum_{\arc\in \hat{E}} \; w(\arc)\cdot f(\arc).
	\end{equation}
	The objective is to find a flow of minimum weight.
	
	Given a pseudoflow, the \textit{residual capacity} of an arc $\arc\in\hat{E}$ is $c_f(\arc)=c(\arc) - f(\arc)$.
	The \textit{residual graph} induced by the flow is $G_f=(\Nodes, E_f)$, where the set of \textit{residual arcs} $E_f$
	contains the arcs that satisfy $c_f(\arc)>0$ and the reversed arcs $\arc[vu]$ such that $c_f(\arc)=0$.
	The weight function $w_f$ defined over the residual arcs is $w_f(\arc)=w(\arc)$ for forward arcs,
	and $w_f(\arc[vu]) = -w(\arc)$ for reversed arcs.
	
	Let $p:\Nodes \rightarrow \mathbb{R}$ be a function that assigns prices to the vertices.
	The \textit{reduced cost} of an arc $\arc[xy]\in E_f$ is given by
	$w_p(\arc[xy]) =	w_f(\arc[xy]) + p(x) - p(y)$.
	The \textit{partial reduced cost} of an arc $\arc\in \hat{E}$ is
	$w'_p(\arc)=w(\arc)-p(v)$.
	
	Given $\epsilon>0$, a pseudoflow $f$ is said to be \textit{$\epsilon$-optimal}, with respect to the price
	function $p$, if every arc of $E_f$ satisfies the following:
	\begin{subequations}\label{for_rev_opt_cond_00}
		\begin{align}
			\mbox{For a reversed arc $\arc[vu]$:} &	\qquad\qquad\qquad w_p(\arc[vu]) \geq -\epsilon, \qquad\qquad\qquad\qquad\qquad	\label{for_rev_opt_cond_01}\\
			\mbox{For a forward arc $\arc$:} \; &	\qquad\qquad\qquad w_p(\arc) \geq 0. \qquad\qquad\qquad\qquad\qquad	\label{for_rev_opt_cond_02}
		\end{align}
	\end{subequations}
	
	\begin{theorem}
		Let $\epsilon<1/n$. If $f$ is $\epsilon$-optimal with a price function $p$, then it is of minimum weight.
		\label{optimality_flow_theorem_0}
	\end{theorem}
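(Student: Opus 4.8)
The plan is to reduce the statement to the classical negative-cycle optimality criterion for minimum-cost flows: a flow $f$ is of minimum weight if and only if its residual graph $G_f$ contains no directed cycle of negative weight with respect to $w_f$. Here I take $f$ to be a flow, i.e.\ to have no active nodes, so that comparison with other flows is meaningful. Granting this criterion, it suffices to prove that $\epsilon$-optimality with $\epsilon<1/n$ forbids negative cycles in $G_f$, so the whole argument reduces to estimating the weight of an arbitrary simple cycle $C$ of $G_f$.

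First I would record that prices cancel around a cycle. Summing the defining identity $w_p(\arc[xy])=w_f(\arc[xy])+p(x)-p(y)$ over the arcs of $C$, the terms $p(x)-p(y)$ telescope to zero, so
\[ \sum_{a\in C} w_p(a)=\sum_{a\in C} w_f(a)=w_f(C). \]
Thus it is enough to bound the total reduced cost of $C$ from below, and for that I would invoke the $\epsilon$-optimality conditions \eqref{for_rev_opt_cond_01}--\eqref{for_rev_opt_cond_02}.

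The crucial point is the bipartite bookkeeping. Forward residual arcs run from $U$ to $V$ and reversed residual arcs run from $V$ to $U$, so a simple cycle alternates the two types and uses exactly as many reversed arcs as forward arcs; if it uses $k$ reversed arcs it visits $k$ distinct vertices of $V$, whence $k\le n$. Since each forward arc contributes $w_p\ge 0$ and each reversed arc contributes $w_p\ge -\epsilon$, only the reversed arcs incur a penalty, and
\[ w_f(C)=\sum_{a\in C} w_p(a)\ \ge\ -k\epsilon\ \ge\ -n\epsilon\ >\ -1, \]
the last inequality using $\epsilon<1/n$. As the weights are integral, $w_f(C)\in\mathbb{Z}$, so $w_f(C)>-1$ forces $w_f(C)\ge 0$; hence $G_f$ has no negative cycle and $f$ is optimal.

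I expect the main obstacle to be getting the counting right rather than any hard estimate: a naive bound of $-\epsilon$ on every one of the (up to $2n$) arcs of $C$ would only give $w_f(C)>-2$, which is too weak to conclude by integrality. The saving comes precisely from the one-sided conditions, namely that forward arcs are nonnegative, so that at most $n$ arcs of the cycle can contribute negatively. A secondary point worth stating carefully is the legitimacy of the negative-cycle criterion for this particular construction (bipartite, unit capacities, unit supplies and demands), which I would either cite from standard min-cost flow theory or verify directly by augmenting $f$ along a hypothetical negative cycle to strictly decrease its weight.
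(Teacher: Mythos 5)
The paper itself gives no proof of this theorem --- it defers to the Goldberg--Tarjan and Goldberg--Kennedy papers it cites --- so there is no internal proof to compare against; measured against the standard argument in those references, your proposal is correct and is essentially that argument. The two points that carry the proof are both present and handled properly: the telescoping of prices around a residual cycle, and the observation that in this bipartite residual graph a simple cycle alternates forward ($U$ to $V$) and reversed ($V$ to $U$) arcs, so the penalty $-\epsilon$ is charged only to the at most $n$ reversed arcs rather than to all of the up to $2n$ arcs of the cycle. As you note, the symmetric bound would give only $w_f(C) > -2$, which integrality cannot convert into $w_f(C)\ge 0$; the one-sided forward condition $w_p \ge 0$ is precisely what makes the threshold $\epsilon < 1/n$ (rather than $1/(2n)$) suffice, and this asymmetry is the whole point of the Goldberg--Kennedy formulation of $\epsilon$-optimality. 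The one dependency you leave external is the negative-cycle optimality criterion; since you restrict to flows (no active nodes), the direction you need follows by decomposing $f^*-f$ into simple cycles of $G_f$ whose total residual cost equals $w(f^*)-w(f)<0$, forcing one cycle to be negative --- standard, but worth a sentence if you want the proof self-contained.
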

	The proof of this theorem is given in \cite{gold_proof1} and \cite{Gold_ken2}.
	Note that given a pseudoflow, a matching is induced by the arcs that carry one unit of flow.
	The full pseudocode of the algorithm is shown in Algorithm~\ref{Goldberg_Kennedy_01}.
	The real number $\alpha>1$ is a custom parameter and remains constant during the execution of the algorithm.
	During the algorithm, we assume that we construct and keep track of
	the directed graphs $\hat{G}$ and $G_f$.
	To keep track of the matching induced by the resulting flow, we use the variable $M$,
	where $M(v)=u$ if and only if $f(\arc)=1$.
	
	\begin{algorithm}[caption={Goldberg \& Kennedy algorithm}, label={Goldberg_Kennedy_01}]
		Input: a weighted bipartite graph $\{ G,\, w \}$.
		Output: a matching of minimum weight.
		
		procedure Goldberg_Kennedy($G, w$)
		$\qquad$$\epsilon=W$
		$\qquad$$p(v)=0$, $\forall v \in V$
		$\qquad$while $\epsilon \geq 1/n$ do
		$\qquad$$\qquad$$\epsilon=\epsilon / \alpha$
		$\qquad$$\qquad$$(f,\, p) =$refine$(G,\, w,\, \epsilon,\, p)$
		$\qquad$end
		$\qquad$return $M$ (Matching induced by $f$)
		end
		
		procedure refine($G,\, w,\, \epsilon,\, p$)
		$\qquad$$f(\arc)=0$, $\forall \arc\in \hat{E}$
		$\qquad$$p(u)=-\min_{\arc[uz]\in \hat{E}}\; w'_p(\arc[uz])$, $\forall u\in U$
		$\qquad$while $f$ is not a flow do
		$\qquad$$\qquad$take an active $u \in U$
		$\qquad$$\qquad$double_push(u)
		$\qquad$end
		$\qquad$return($f,p$)
		end
		
		procedure double_push($u$)
		$\qquad$let $\arc$ and $\arc[uz]$ be the arcs with the smallest and
		$\qquad$ second smallest partial reduced costs, respectively
		$\qquad$$p(u)=-w'_p(\arc[uz])$
		$\qquad$send one unit of flow from $u$ to $v$
		$\qquad$if($e_f(v)>0$)
		$\qquad$$\qquad$send one unit of flow from $v$ to its match $M(v)$
		$\qquad$$M(v)=u$
		$\qquad$$p(v)=p(u)+w(\arc) - \epsilon$
		end
	\end{algorithm}

	The following theorem, which proof can be found in \cite{Gold_ken2} and \cite{gold_proof1}, states the correctness of the algorithm.
	
	\begin{theorem}
		If $G$ is feasible and balanced, then the \textit{Goldberg\_Kennedy} procedure will finish and will return a matching of minimum weight.
	\end{theorem}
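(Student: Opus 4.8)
The plan is to prove the two assertions separately: that the procedure halts, and that the flow it returns has minimum weight, whence the matching $M$ induced by the arcs carrying one unit of flow does too. The engine of the correctness half is Theorem~\ref{optimality_flow_theorem_0}: it suffices to show that the pseudoflow produced by the last call to \textit{refine} is an honest flow that is $\epsilon$-optimal for the final value $\epsilon<1/n$. I would therefore isolate the invariant that \emph{after the price reset and after every \textit{double\_push}, the current pseudoflow $f$ is $\epsilon$-optimal with respect to the current prices}, and maintain it through the entire inner loop.

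The first step is to check the invariant right after the reset $p(u)=-\min_{\arc[uz]\in\hat{E}} w'_p(\arc[uz])$: with $f\equiv 0$ there are no reversed residual arcs, and every forward arc satisfies $w_p(\arc)=w'_p(\arc)+p(u)\geq 0$, so~(\ref{for_rev_opt_cond_02}) holds. The second and central step is to verify that a single \textit{double\_push}$(u)$ preserves both~(\ref{for_rev_opt_cond_01}) and~(\ref{for_rev_opt_cond_02}). Here I would use that $\arc$ and $\arc[uz]$ carry the smallest and second-smallest partial reduced cost: after the update $p(u)=-w'_p(\arc[uz])$, every forward arc $\arc[uy]$ with $y\neq v$ has reduced cost $w'_p(\arc[uy])-w'_p(\arc[uz])\geq 0$, while the unique arc $\arc$ of smallest cost is saturated by the push and becomes the reversed arc $\arc[vu]$; the subsequent assignment $p(v)=p(u)+w(\arc)-\epsilon$ is precisely the value making $w_p(\arc[vu])=-\epsilon$, meeting~(\ref{for_rev_opt_cond_01}) with equality, and it only lowers $p(v)$, hence only relaxes the forward constraints on arcs entering $v$. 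This reduces to a short finite check over the arcs incident to $u$ and $v$.

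Granting the invariant, optimality is immediate: the inner \texttt{while} terminates only when $f$ has no active node, i.e.\ when $f$ is a flow, and by the invariant it is $\epsilon$-optimal; on the last scaling phase $\epsilon<1/n$, so Theorem~\ref{optimality_flow_theorem_0} yields that $f$ has minimum weight, and the arcs with $f(\arc)=1$ form a minimum-weight perfect matching, which is exactly what $M$ records. Feasibility together with $|U|=|V|$ enters precisely at this point: they guarantee that a flow saturating all supplies exists, so the set of active persons can indeed be driven to empty.

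The harder half is termination. The outer loop is routine: $\epsilon$ starts at $W$ and is divided by $\alpha>1$ each pass until $\epsilon<1/n$, giving $O(\log(nW))$ phases. The genuine obstacle is showing that each call to \textit{refine} performs only finitely many \textit{double\_push} operations; this is the exact analogue for flows of Proposition~\ref{auct_terminates_prop} for the auction procedure. I would argue it by a monotonicity/potential argument: each \textit{double\_push} strictly lowers the price $p(v)$ of the object receiving the bid (by at least $\epsilon$), while $\epsilon$-optimality forces every price to stay above a bound depending only on the weights; since a person becomes inactive once matched and an object can be re-bid only after its price strictly drops, the number of bids per object, and hence the total number of \textit{double\_push} calls, is finite (in fact $O(nm)$ per phase, matching the stated $O(nm\log(nW))$ bound). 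The delicate point, and the step I expect to require the most care, is establishing that uniform lower bound on prices that rules out infinite descent; feasibility is what supplies it, which is also why an infeasible instance makes the loop run forever.
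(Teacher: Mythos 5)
First, note that the paper does not actually prove this theorem: it explicitly defers the proof to \cite{Gold_ken2} and \cite{gold_proof1}, so there is no in-paper argument to compare yours against; I can only assess your outline on its merits. Its architecture (invariant preservation $+$ Theorem~\ref{optimality_flow_theorem_0} for optimality, a potential argument for termination) is the standard and correct one, but as written it has one genuine gap in the correctness half. The invariant you propose to maintain --- plain $\epsilon$-optimality of the current pseudoflow --- is not self-sustaining through \textit{double\_push} in the case where $v$ is stolen from its current mate $u'$. At that moment the reversed arc $\arc[vu]$ for the pair $u'v$ flips back into the forward arc $\arc[u'v]$, and condition~(\ref{for_rev_opt_cond_01}) only guarantees $w(\arc[u'v])+p(u')-p(v)\leq\epsilon$, an \emph{upper} bound, whereas condition~(\ref{for_rev_opt_cond_02}) now demands the lower bound $0$. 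Your observation that lowering $p(v)$ ``only relaxes the forward constraints on arcs entering $v$'' does not close this: the drop in $p(v)$ is only guaranteed to be at least $\epsilon$, while for all the invariant says the flipped arc's forward reduced cost could be far below $0$. The repair is to strengthen the invariant to record that every matched pair satisfies $w_p(\arc[vu])=-\epsilon$ \emph{exactly} --- which the last line of \textit{double\_push} establishes and which persists because neither endpoint's price changes while the pair remains matched --- so that the flipped arc has forward reduced cost exactly $\epsilon\geq 0$ before $p(v)$ is lowered further. (This is precisely the fact the paper exploits in its equivalence section when it asserts $p(u)=-\min_{uz\in\hat{E}}w_p'(uz)$ at the end of every iteration.)

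The termination half is the part you yourself flag as incomplete, and it is: you assert that feasibility supplies a uniform lower bound on prices but give no argument for it, and that bound is the entire content of the finiteness claim (as the paper notes, infeasible instances make the loop run forever, so feasibility must enter in an essential way). The standard argument, found in \cite{auctionA_Bert1} and \cite{Gold_ken2}, considers the set of objects receiving infinitely many bids, shows their prices diverge to $-\infty$ while the remaining prices are eventually constant, and derives a violation of Hall's condition from the fact that the persons who bid infinitely often must eventually restrict their bids to that set. Likewise the $O(nm)$ per-phase bound requires bounding the total price decrease per phase using the $\alpha\epsilon$-optimality inherited from the previous phase, which you assert but do not derive. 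Neither point is wrong in intent, but both would need to be filled in for a complete proof.
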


	\section{Equivalence of the $\epsilon$-scaling auction and the Goldberg \& Kennedy algorithms}

	It is worth mentioning that the pseudocode of the previous algorithms were presented a bit different from the original versions,
	while maintaining the same flow of operations.
	The objective in doing so is to make easier, to the reader, to observe the similitude between the algorithms.
	
	The central idea to see the equivalence is to make changes to the G\&K algorithm that turn it into the $\epsilon$-scaling auction algorithm.
	The changes are focused in getting rid of the auxiliary directed graphs used in the G\&K algorithm, which turn out to be redundant,
	inducing more expensive computations and an unnecessary increase of the memory space, as well as hard to follow.
	
	We are going to go through the Goldberg \& Kennedy algorithm as described in Algorithm~\ref{Goldberg_Kennedy_01}. 
	First note that \textit{reduced cost} from the auction algorithm and
	\textit{partial reduced cost} from Goldberg \& Kennedy are the same thing.
	Given an underlying edge $uv$, both are defined as $w(uv) - p(v)$.
	
	Let us go through the \textit{refine} procedure line-by-line to show the equivalence with the \textit{auction} procedure described in Algorithm~\ref{auction_01}.
	At first, the flow vanishes, which is the same as setting the induced matching empty.
	The second line only initializes the prices of the set $U$ which, as we will see later, are redundant
	and a waste of computation, we can safely remove this initialization.
	The {\sf while} condition is the same as testing whether the induced matching has less than $n$ edges.
	And finally, taking an active vertex is the same as taking a vertex which is not assigned under
	the induced matching.
	Therefore, we can rewrite the refine procedure as in Algorithm \ref{refine_01}.
	
	\begin{algorithm}[caption={Modified refine procedure}, label={refine_01}]
		procedure refine($G,\, w,\, \epsilon,\, p$)
		$\qquad$$M(v)=\phi$, $\forall v\in V$;
		$\qquad$while $|M| < n$ do
		$\qquad$$\qquad$take an unassigned $u \in U$;
		$\qquad$$\qquad$double_push(u);
		$\qquad$end
		$\qquad$return($M,p$);
		end
	\end{algorithm}
	
	Continuing with the \textit{double\_push} procedure of Algorithm~\ref{Goldberg_Kennedy_01}, the first instruction is clearly the same as the first instruction of the \textit{bid} procedure of Algorithm~\ref{auction_01}.
	For the second instruction, note that the previous value of the price of $u$ is completely ignored, because it is
	replaced by the minimum partial reduced cost given by its neighbours.
	This observation proves that the initialization discussed in the previous paragraph, inside the original refine procedure, is indeed redundant.
	Back to the \textit{double\_push} procedure, note that the only place where the price of $u$ is used is in the last instruction,
	therefore we can directly replace the computed price in this last instruction and forget about the second instruction.
	There are no more places in the pseudocode where the prices of $U$ show up, this shows that the prices for the vertices of $U$ are not needed.
	Once we substitute the computed price of the second line into the last line, the last instruction is equivalent to
	$p(v)=p(v) - \gamma - \epsilon$, where $\gamma=w_p'(uz)-w_p'(uv)$, as the following chain shows:	
	$$p(v) = -w_p'(uz) + w(uv) - \epsilon = p(v) - w_p'(uz) + w(uv) - p(v) - \epsilon = p(v) - (w_p'(uz) - w_p'(uv)) - \epsilon$$
	
	The instruction where we send one unit of flow from $u$ to $v$ and the instruction $M(v)=u$ in the double\_push procedure are equivalent,
	but since we want to get rid of the flow, we reject the first one and just conserve the second one.
	Finally, the condition in the \textit{if} instruction is equivalent to test if the vertex $v$ is already assigned under the induced matching,
	and sending one unit of flow from $v$ to $M(v)$ is equivalent to remove the edge $M(v)v$ from the matching.
	Then the double\_push procedure can be rewritten in the equivalent form shown in Algorithm \ref{double_push_02}.
	
	\begin{algorithm}[caption={Modified double\_push procedure}, label={double_push_02}]
		procedure double_push($u$)
		$\qquad$let $uv$ and $uz$ be the arcs with the smallest and
		$\qquad$ second smallest partial reduced costs, respectively;
		$\qquad$if($v$ is assigned to some $u'\in U$)
		$\qquad$$\qquad$$M(v)=\phi$;
		$\qquad$$M(v)=u$;
		$\qquad$$\gamma=w_p'(uz)-w_p'(uv)$;
		$\qquad$$p(v)=p(v) - \gamma - \epsilon$;
		end
	\end{algorithm}
	
	As we can see, we have removed the need of keeping the matching in two equivalent structures, the flow and the matching.
	Therefore we can also replace the flow by the matching in the Goldberg\_Kennedy procedure.
	If we compare side by side this equivalent version of the Goldberg \& Kennedy algorithm with the $\epsilon$-scaling auction algorithm,
	we will notice they are the same.
	
	One final thing to do is to make sure that the optimality condition of the Goldberg \& Kennedy algorithm is equivalent to the
	optimality condition of the $\epsilon$-scaling auction algorithm.
	Since the optimality condition makes use of the prices
	of the vertices of $U$, which were removed in the new versions,
	we will focus on the original pseudocode of the G\&K algorithm.
	
	If we keep track of the prices of the vertices of $U$ in the original pseudocode, it is not hard to find that at the
	end of every iteration in the loop, the price of any vertex $u\in U$ is equal to $p(u)=-min_{uz\in \hat{E}} w_p'(uz)$.
	This is clear in the initialization, but at the beginning of the double\_push procedure the price $p(u)$ is modified such
	that it is assigned the negative second minimum partial reduced cost for $u$, but at the end of the procedure the price $p(v)$ is
	also changed in such a way that the second minimum is now the minimum partial reduced cost.
	Therefore, the original optimality condition (\ref{for_rev_opt_cond_00}) is equivalent to the following:
	\begin{enumerate}
		\item For a reversed arc $\arc[vu]$ (i.e. for every $uv$ in the matching):
		\begin{equation}
			\begin{array}{rclc}
				w_p(\arc[vu]) & \geq &  - \epsilon &  \Longleftrightarrow \\
				w_f(\arc[vu]) + p(v) - p(u) & \geq & - \epsilon &  \Longleftrightarrow  \\
				-(w(\arc) - p(v))  & \geq & p(u) - \epsilon &  \Longleftrightarrow \\
				w(\arc) - p(v)  & \leq & -p(u) + \epsilon &  \Longleftrightarrow \\
				w_p'(uv) & \leq & min_{uz\in \hat{E}} w_p'(uz) + \epsilon, &
			\end{array}
		\end{equation}
		\item The second condition is redundant as we can see following, proceeding similar to the previous case.
		For a forward arc $\arc$ (i.e. for every other edge $uv$):
		$$w_p(\arc) \geq 0 \qquad\Longleftrightarrow\qquad w_p'(uv) \geq min_{uz\in \hat{E}} w_p'(uz)$$
	\end{enumerate}
	
	Which is the same as the optimality condition of the $\epsilon$-scaling auction algorithm.
	
	\section{Performance analysis}
	
	This section is dedicated to present experimental results about the performance of three different algorithms:
	the $\epsilon$-\emph{scaling auction algorithm} with theoretical time complexity of $O(nm\log(nW))$, the \emph{Hungarian algorithm} \cite{hung_kuhn1} with 
	time complexity of $O(mn+n^2\log n)$ and the \emph{FlowAssign algorithm} \cite{lyle_tarjan_1} with time complexity of $O(m\sqrt{s}\log(sW))$.
	Where $\{G=(U\sqcup V, E), w\}$ is a weighted bipartite graph not necessarily balanced, with $n=|U|\geq |V|=s$.
	The objective is to show that a very easy-to-understand and easy-to-implement algorithm like the auction algorithm
	performs and scales a lot better in practice than algorithms that are harder to implement and, as in the case of the FlowAssign algorithm, 
	have better theoretical time complexity.
	The reason we do not include the Goldberg \& Kennedy algorithm in this analysis is that we have just shown that this algorithm
	performs redundant computations and memory wastes, therefore it is expected a worst performance respect to
	the auction algorithm. Furthermore, we have also proven that if we optimize the implementation of the G\&K algorithm
	we will end up implementing the auction algorithm.
	
	To compare the performance of the algorithms we will construct different types of random instances with different structures.
	This is motivated by the fact that each algorithm may perform better on some graph structures than others and we want to explore
	how the algorithms behave to different graph configurations.
	
	An instance of the assignment problem has two components, the bipartite graph and the edge weights.
	We will define constructions for each component separately since we can pick one of each to form an instance.
	Given a bipartite graph $G$, we define its \textit{density} as $\displaystyle \rho(G) = \frac{|E|}{n\cdot s} \in [0,1]$.
	Note that in a complete bipartite graph $K_{n,s}$ we have $\rho(K_{n,s}) = 1$.
	
	\subsection{Models for generating random instances}
	
	The first model for generating random bipartite graphs is the \textit{Erd\"os-Renyi} model.
	Here, every possible edge of the bipartite graph
	has probability $d$ to stay in the graph, under a Bernoulli distribution.
	The pseudocode to generate this type of random bipartite graphs is given in the following procedure, which takes as argument: the number of vertices in
	$U$, the number of vertices in $V$ and the target density $d$.
	\begin{algorithm}[caption={Erd\"os-Renyi model for generating random bipartite graphs}, label={erdos_renyi_alg_01}]
		procedure erdos_renyi($n, s, d$)
		$\qquad$$U=\{u_1, \ldots, u_n\}$, $V=\{v_1, \ldots, v_n\}$, $E=\phi$;
		$\qquad$for each $u\in U$, $v\in V$:
		$\qquad$$\qquad$if (getBernoulli(d)==1) add edge $uv$ to $E$;
		$\qquad$return $G=(U\sqcup V, E)$;
		end
	\end{algorithm}
	The function \textit{getBernoulli(d)} returns $1$ with probability $d$ and $0$ with probability $(1-d)$. 
	It is not difficult to see that in the resulting graph $\rho(G)\approx d$.
	Observe that in this model, the degrees of the vertices satisfy that 
	$deg(u) \approx d\cdot s$ for $u\in U$
	and $deg(v) \approx d\cdot n$ for $v\in V$.
	In other words, we get almost null variability in the distribution of the degrees in both sides.
	
	The second model is the \textit{dispersed-degree} model. 
	It is designed to make the distribution of the degrees in the side $U$ more variable while preserving the target density $d$.
	Basically, we define the degree of each vertex $u\in U$ as an integral uniform random number in the interval 
	$[d\cdot s - r, d\cdot s + r]$, where $r$ is a custom \textit{dispersion radius}.
	Note that $r\leq s\cdot \min(d,1-d)$, to keep the degrees in the valid range $[0,s]$.
	Since different densities induce different upper bounds for the dispersion radius, then we define the
	\textit{normalized dispersion radius} as the quotient of the dispersion radius by the upper bound.
	Thus the normalized dispersion radius is always in the range $[0,1]$.
	Therefore, since the interval is centered at $d\cdot s$, then $\rho(G)\approx d$.
	Once we have defined the degree $deg(u)$ of a vertex $u\in U$, its neighbors are a random subset of $V$ of size $deg(u)$.
	The following pseudocode shows how to generate this class of graphs. 
	It takes as input: the number of vertices in $U$, the number of vertices
	in $V$, the desired density $d$, and the custom radius of dispersion $0\leq r \leq s\cdot \min(d,1-d)$.
	\begin{algorithm}[caption={Dispersed-degree model for random bipartite graphs}, label={disp_deg_alg_01}]
		procedure dispersed_degree($n, s, d, r$)
		$\qquad$$U=\{u_1, \ldots, u_n\}$, $V=\{v_1, \ldots, v_n\}$, $E=\phi$;
		$\qquad$for each $u\in U$ define $deg(u):=rand(d\cdot s-r,d\cdot s+r)$;
		$\qquad$for each $u\in U$:
		$\qquad$$\qquad$take $deg(u)$ random elements of $V$ as neighbors of $u$;
		$\qquad$return $G=(U\sqcup V, E)$;
		end
	\end{algorithm}
	
	For the structure of the random weights, we assume that the weights will be in the integer range $\{1,\ldots,100000\}$.
	We have three models for assigning random weights to the edges. 
	The first model is the \textit{uniform-weights model} that assigns to every edge an uniform random weight in 
	the range $\{1, \ldots, 100000\}$.
	The second model is the \textit{uniform-low-high-weights model}. This model randomly partitions the set of edges in two
	parts, the low-weights part and the high-weights part, according to a parameter $p\in[0,1]$. 
	Every edge has probability $p$ of being in the low-weights part, under a Bernoulli distribution,
	whose size is $\approx p\cdot |E|$ and the weights of these edges are chosen random uniform in the range $\{1,\ldots,1000\}$. The size of the high-weights part is $\approx (1-p)\cdot |E|$ and the weights
	of these edges are chosen random uniform in the range $\{1001,\ldots,100000\}$.
	The third model is the \textit{low-or-high-weights model} and is similar to the uniform-low-high-weights model,
	the difference is that the weights of the low-weights part are fixed at $1$ and the weights of the
	high-weights part are fixed at $100000$.
	
	From the time complexities of the algorithms we can see that if $|V|$ is asymptotically smaller than $|U|$, then the FlowAssign algorithm
	is expected to be on advantage. Furthermore, the random graphs that we will generate can be unbalanced, in this case the
	objective is to find a minimum-weight matching that covers all the vertices in the smaller side $V$.
	The Hungarian and the FlowAssign algorithms are designed to directly solve the assignment problem on unbalanced graphs, while
	the auction algorithm is not. The auction algorithm must address that problem by working on an induced balanced graph that has the double 
	of vertices and edges, therefore the first two algorithms are expected to be on advantage on unbalanced instances.
	
	\subsection{The experimental results}
	
	To generate the random instances to be solved by the algorithms, we defined a list of values for each parameter 
	and solved instances for all the possibilities of parameter configurations. The list of values are:
	\begin{enumerate}
		\itemsep-0.2em
		\item Edges distribution = \{erdos-renyi, dispersed-degree\},
		\item Costs distribution = \{uniform, uniform-low-high, low-or-high\},
		\item $n = \{1000, 2000, 4000, 8000\}$,
		\item $s = \{\log n, \sqrt{n}, n\}$,
		\item $Density = \{0.1, 0.5, 1.0\}$,
		\item Normalized dispersion radius $= \{0.1, 0.5, 1.0\}$ (only for dispersed degree),
		\item Low costs portion $= \{0.1, 0.5, 0.9\}$ (only for uniform-low-high and low-or-high).
	\end{enumerate}
	Note that we include families of asymptotically smaller values of $s$ respect to $n$ to explore the advantage of the FlowAssign algorithm's time complexity.
	Each instance is defined by picking one of each parameter.
	Since the instances are generated randomly and the computer can have different workload when solving an instance, we decided
	to take the solving time of a fixed set of parameters as the average solving time of solving ten different
	random instances generated under the same parameters.
	Also, in the following experimental results, given a subset of parameters at fixed values, its solving time is the average time of all
	the instances that share the same values in such parameters.
	The machine used to solve the instances is Windows 8.1 (64-bit), Intel i5-4670 at 3.4 GHz (4 CPUs) and 16 GB RAM.
	
	In Figure \ref{sensitivity_r}, we explore the sensitivity of the algorithms respect to the normalized dispersion radius parameter for the dispersed-degree random graph model.
	We can observe that the auction and FlowAssign algorithms perform worst at high normalized dispersion radius, while the Hungarian algorithm seems to benefit a little 
	at high values of the parameter.

	In Figure \ref{sensitivity_p_loh}, we explore the sensitivity of the low-costs edges portion for the low-or-high-weights costs model.
	Its interesting that this parameter seems completely irrelevant for the Hungarian algorithm, while the FlowAssign algorithm seems to perform significantly
	better at high values of the parameter. For the auction algorithm, the best value seems to lie somewhere in the middle.
	This behavior may be influenced from the fact that the probability that the low-cost edges contain a (one-side) perfect matching increasing quickly with the number
	of edges, due to the way the edges are randomly generated.

	In Figure \ref{sensitivity_p_uloh}, we explore the sensitivity of the low-costs edges portion for the uniform-low-high-weights costs model.
	This time the Hungarian algorithm is sensitive to the parameter and performs worst at high values of the parameter, as well as the auction algorithm.
	The FlowAssign algorithm behaves similar than in the low-or-high-weights costs model.

	In Figure \ref{sensitivity_edges}, we explore the sensitivity of the algorithms to the random graphs models.
	In this case the auction algorithm performs better in the dispersed-degree random graphs model, while the FlowAssign
	and Hungarian algorithm performs better in the Erd\"os-Renyi model.
	This behavior can be related to the normalized dispersion radius.

	In Figure \ref{sensitivity_costs}, we explore the sensitivity of the algorithms to the random costs models.
	For this parameter the behavior is interesting. For the auction algorithm, we can observe a significantly worst
	performance in the low-or-high costs model but only for very unbalanced graphs, while for balanced graphs
	the algorithm performs its worst with the uniform-low-high costs model.
	Interestingly, the FlowAssign algorithm benefits a lot from the low-or-high costs model, and performs equal for the
	other two models.
	And the Hungarian algorithm performs a little better in the uniform costs model than in the other models.
	Remember that the auction algorithm has to solve the problem in an auxiliary graph with double number of vertices and edges
	when the graph is unbalanced.
	
	Note the huge solving time difference between the algorithms in all the graphs presented so far.
	The performance of the auction algorithm is outstanding in every scenario, the performance of the FlowAssign algorithm is acceptable and the performance of the Hungarian algorithm is extremely poor.

	In Figure \ref{sensitivity_unbalanced}, we explore the sensitivity of the algorithms to the unbalanceness gap, that is,
	to the asymptotic of $s=|V|$ respect to $n=|U|$.
	In this case we can observe that for $s=\log n$, the Hungarian algorithm performs better in average, followed by
	the Auction algorithm and then the Flow Assign algorithm, all of them with a significant difference.
	For $s=\sqrt{n}$, the FlowAssign and Hungarian algorithm perform similar between them and a lot better than the Auction algorithm.
	Remember that the auction algorithm is in big disadvantage in the unbalanced case in more than one way.
	For the balanced case $s=n$, the Auction algorithm is in first place, followed by FlowAssign by a significant factor and 
	then Hungarian by a huge factor.

	\section{Conclusions}
	
	We have shown that two apparently different algorithms are rather the same under a few optimizations in allocation space.
	In other words, if we change a little bit the G\&K algorithm in order to get off of redundant objects, we end up exactly with
	the auction algorithm.
	Therefore any heuristic developed for one of the algorithms can be implemented in the other without difficulty.
	Also the auction algorithm automatically induces a parallel implementation \cite{paral_auction1,paral_auction2} on the G\&K algorithm.
	
	From the performance analysis, we conclude that the Auction algorithm performs impressively better against other algorithms, even under conditions that put it in disadvantage.
	The only cases where it performed worst than its competitors is where the time required to solve the instances is very low.
	But in general the auction algorithm outperformed its competitors by a big factor, even to the FlowAssign algorithm which has much better
	time complexity and is designed to exploit the unbalanceness gap of the bipartite graphs.

	\section*{Acknowledgments}
	This research was partially supported by SNI and CONACyT.
	
	\section*{\refname}

	%
	%
	
	\begin{figure}[h]
		\begin{center}
			\begin{tabular}{@{\hspace{-1cm}} l @{\hspace{-1.1cm}}  @{\hspace{-1.2cm}} l @{\hspace{-1cm}}}
				\includegraphics[scale=0.45]{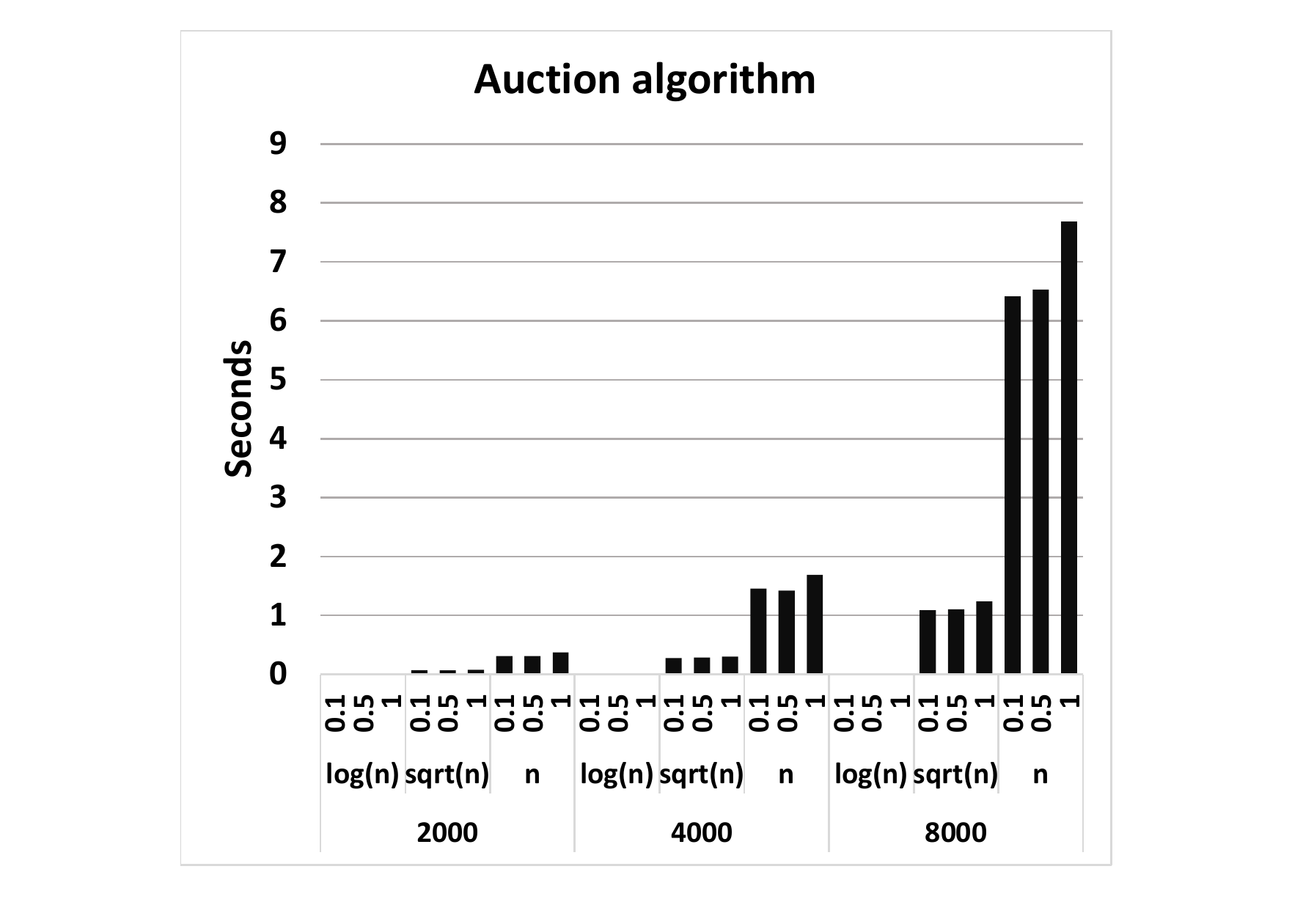} & \includegraphics[scale=0.45]{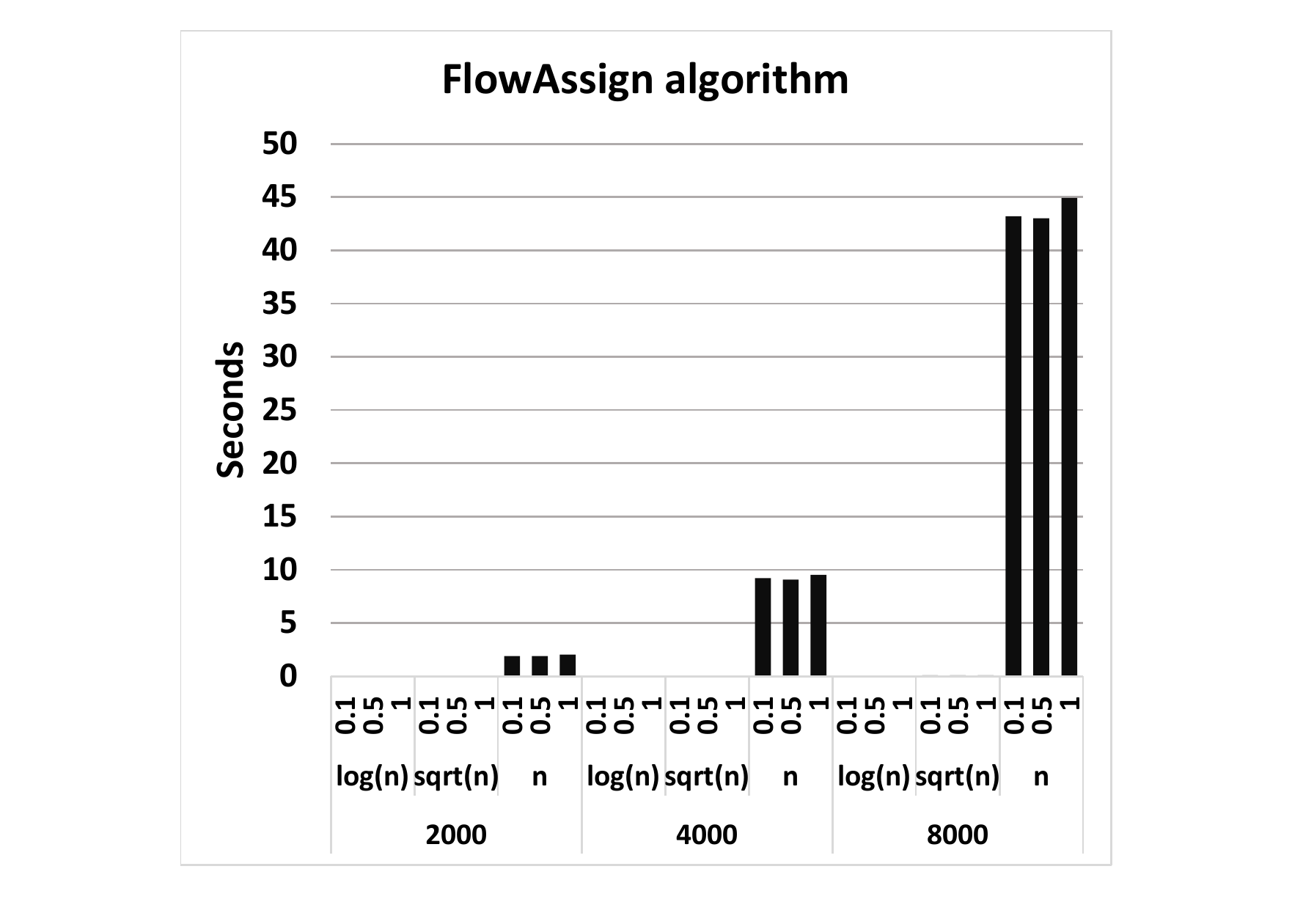} \\[-0.6cm]
				\multicolumn{2}{c}{\includegraphics[scale=0.45]{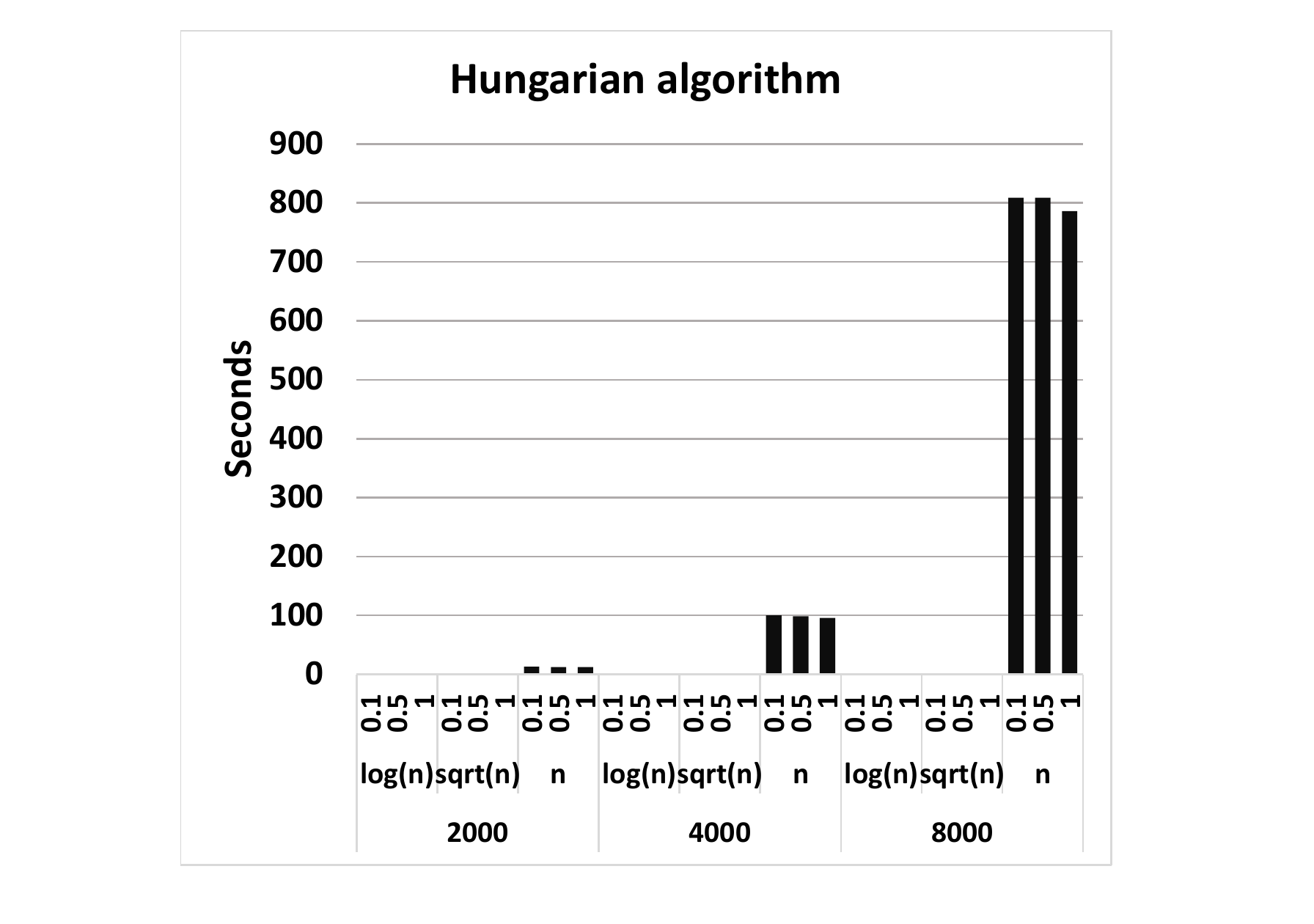}} \\
			\end{tabular}
		\end{center}
		\caption{Sensitivity analysis of the normalized dispersion radius parameter.}
		\label{sensitivity_r}
	\end{figure}
	
	\begin{figure}[h]
		\begin{center}
			\begin{tabular}{@{\hspace{-1cm}} l @{\hspace{-1.1cm}}  @{\hspace{-1.2cm}} l @{\hspace{-1cm}}}
				\includegraphics[scale=0.45]{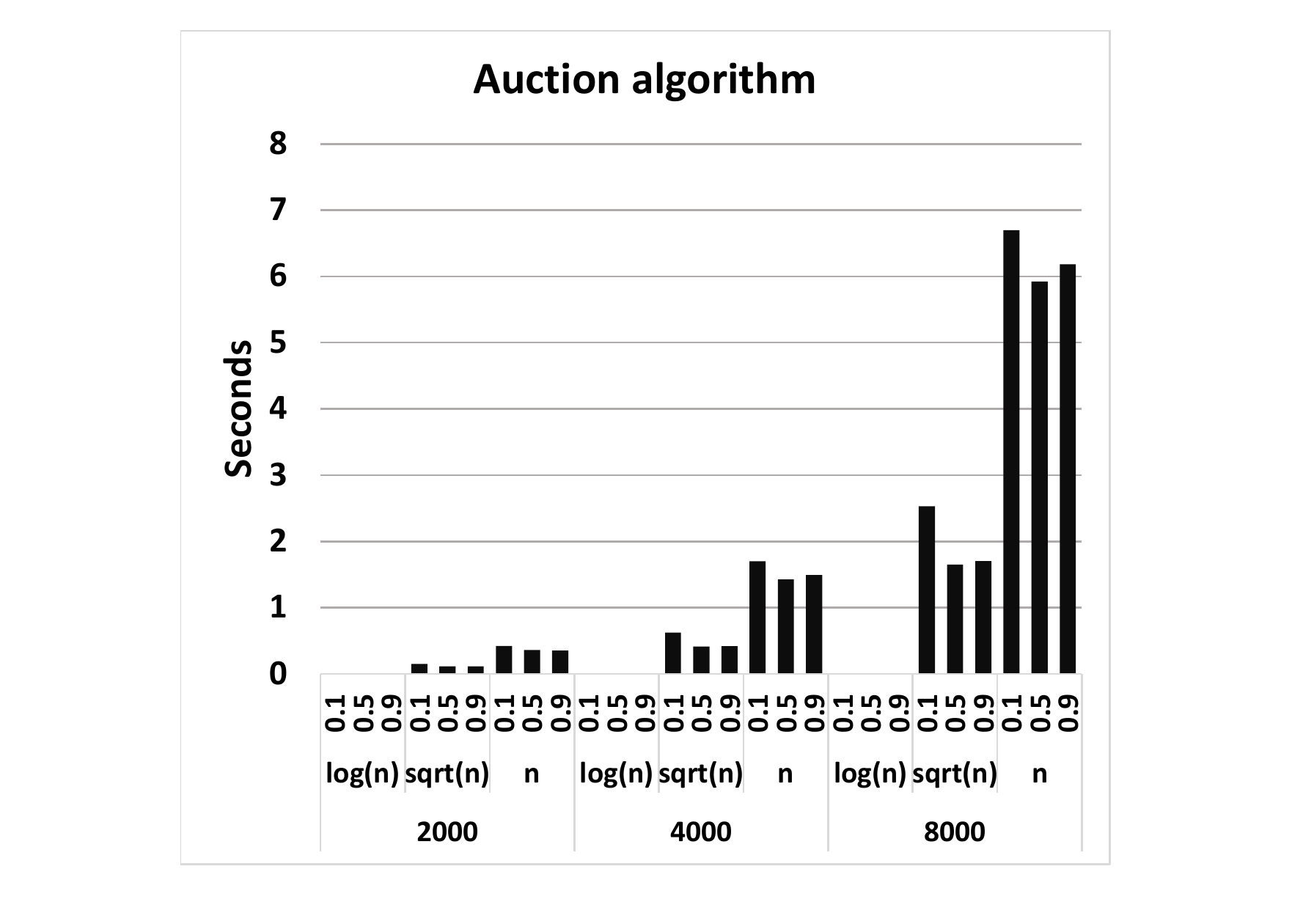} & \includegraphics[scale=0.45]{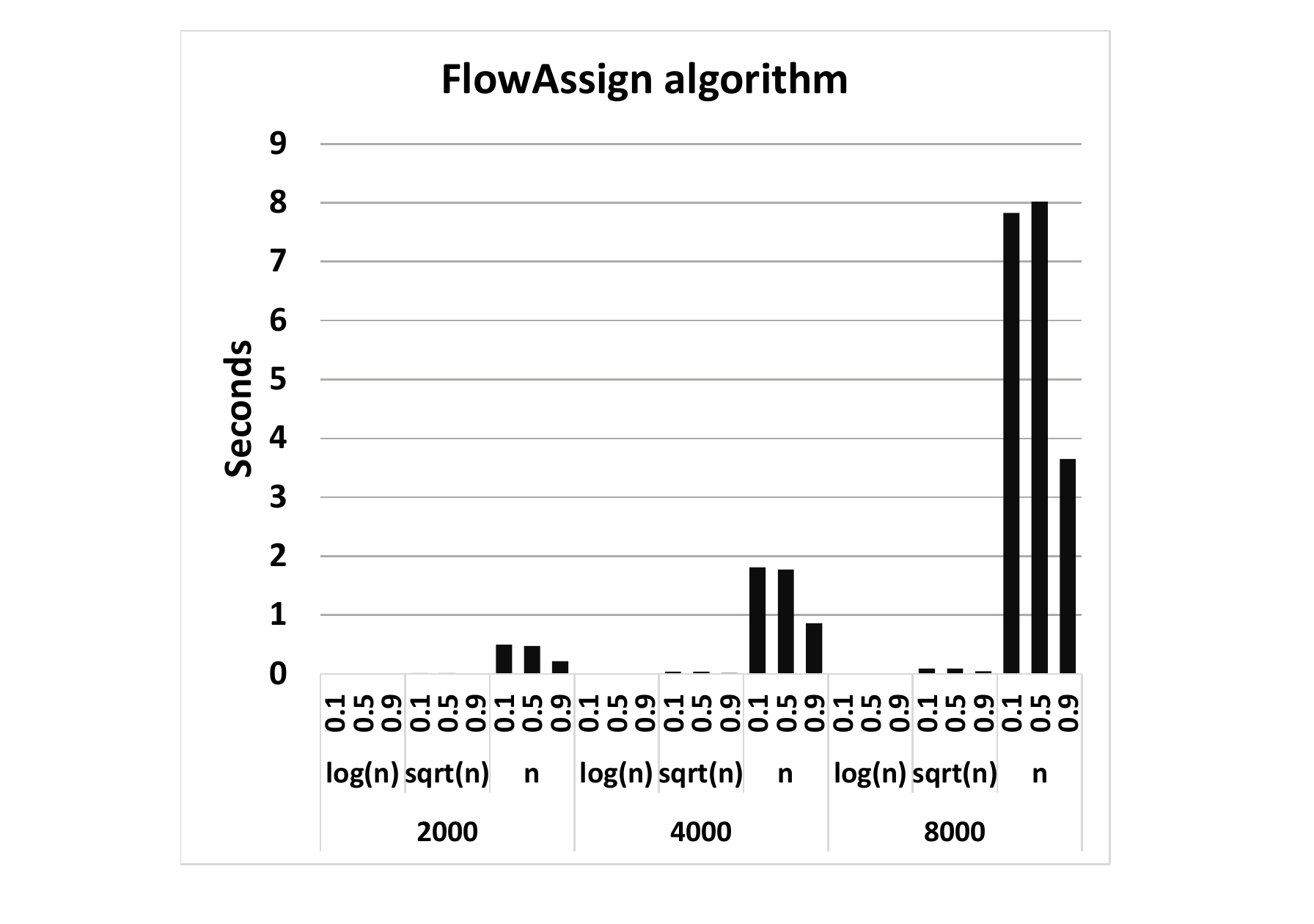} \\[-0.6cm]
				\multicolumn{2}{c}{\includegraphics[scale=0.45]{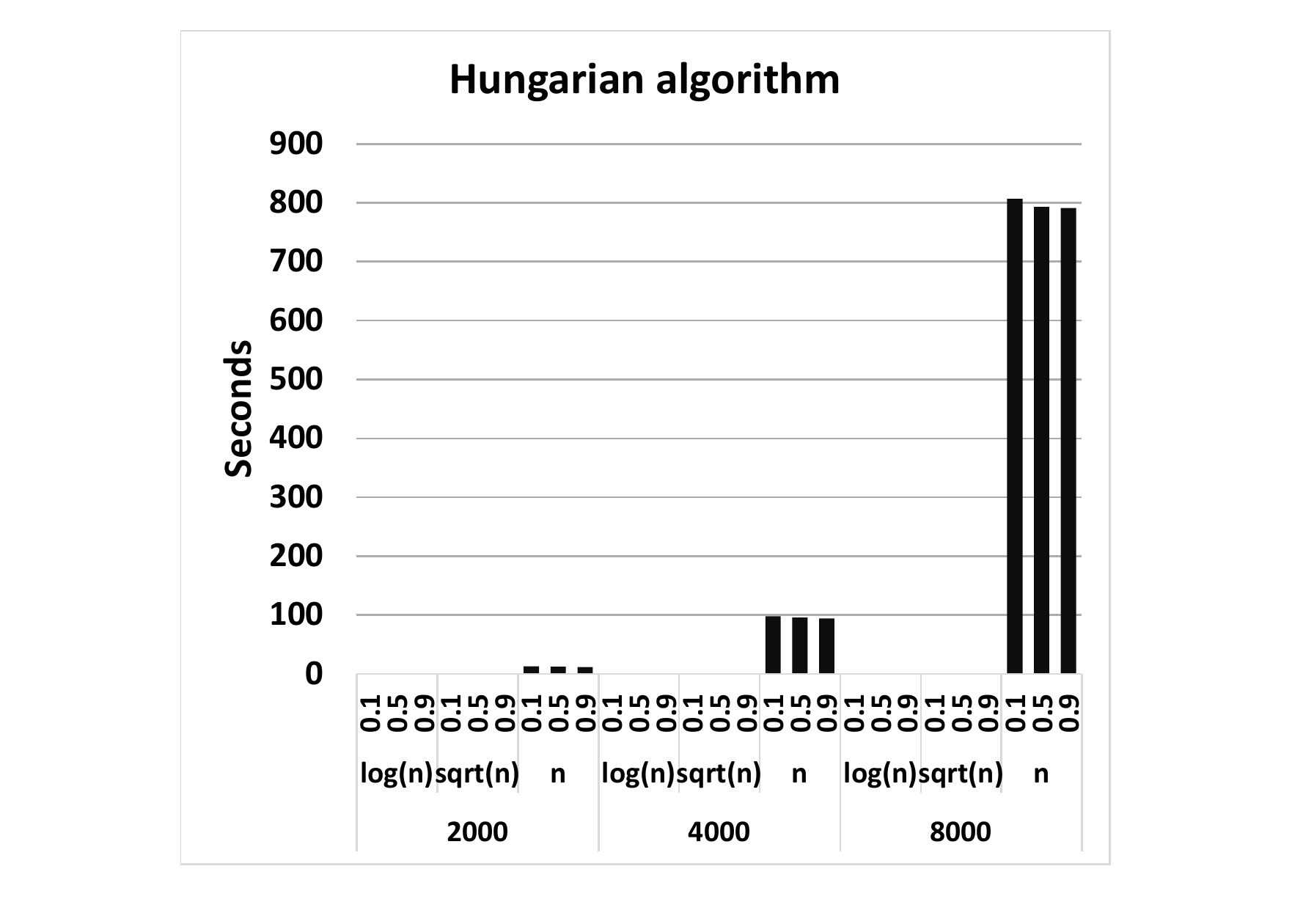}} \\
			\end{tabular}
		\end{center}
		\caption{Sensitivity analysis of the portion of low cost edges for the low-or-high-weights costs model.}
		\label{sensitivity_p_loh}
	\end{figure}
	
	\begin{figure}[h]
		\begin{center}
			\begin{tabular}{@{\hspace{-1cm}} l @{\hspace{-1.1cm}}  @{\hspace{-1.2cm}} l @{\hspace{-1cm}}}
				\includegraphics[scale=0.45]{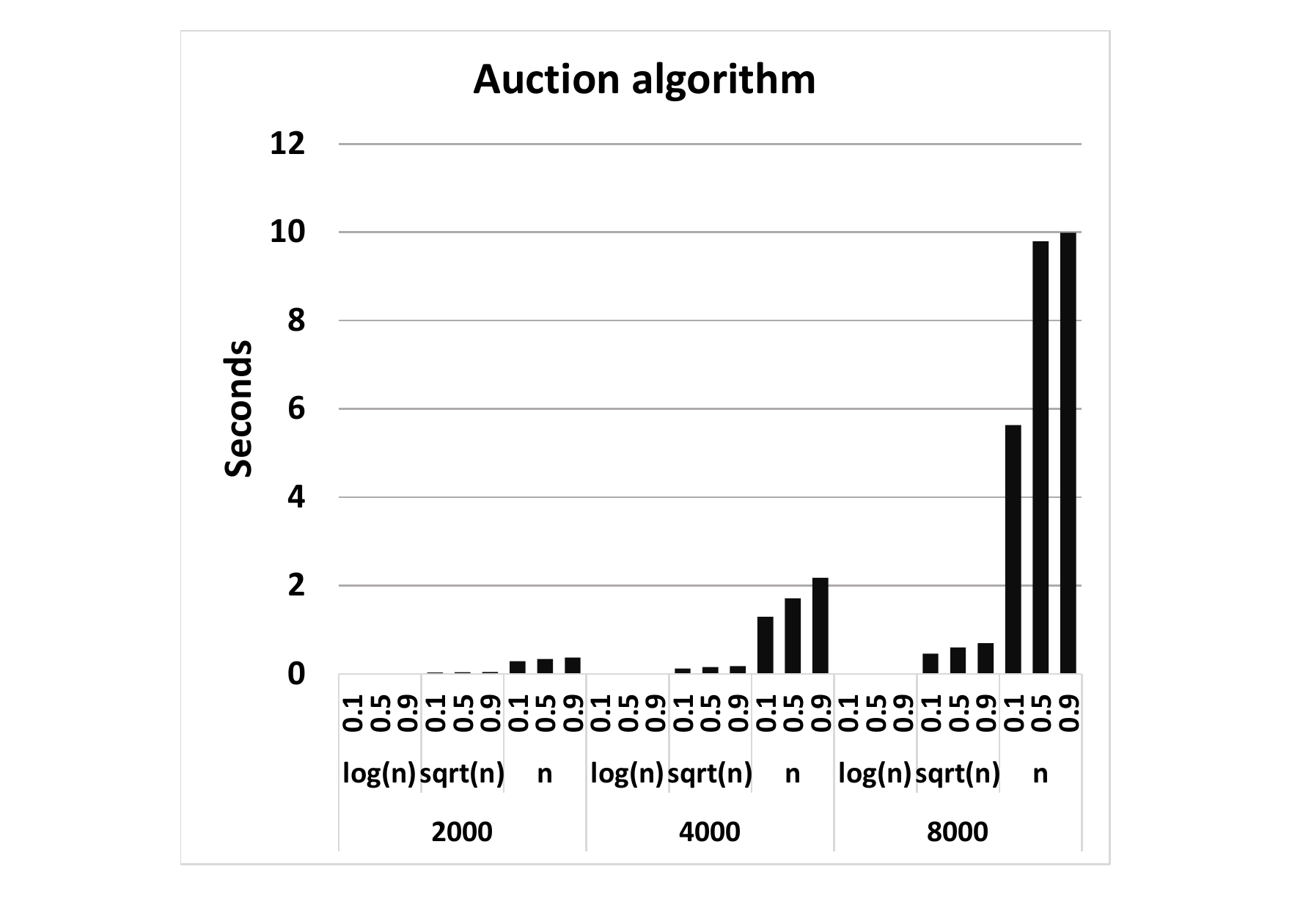} & \includegraphics[scale=0.45]{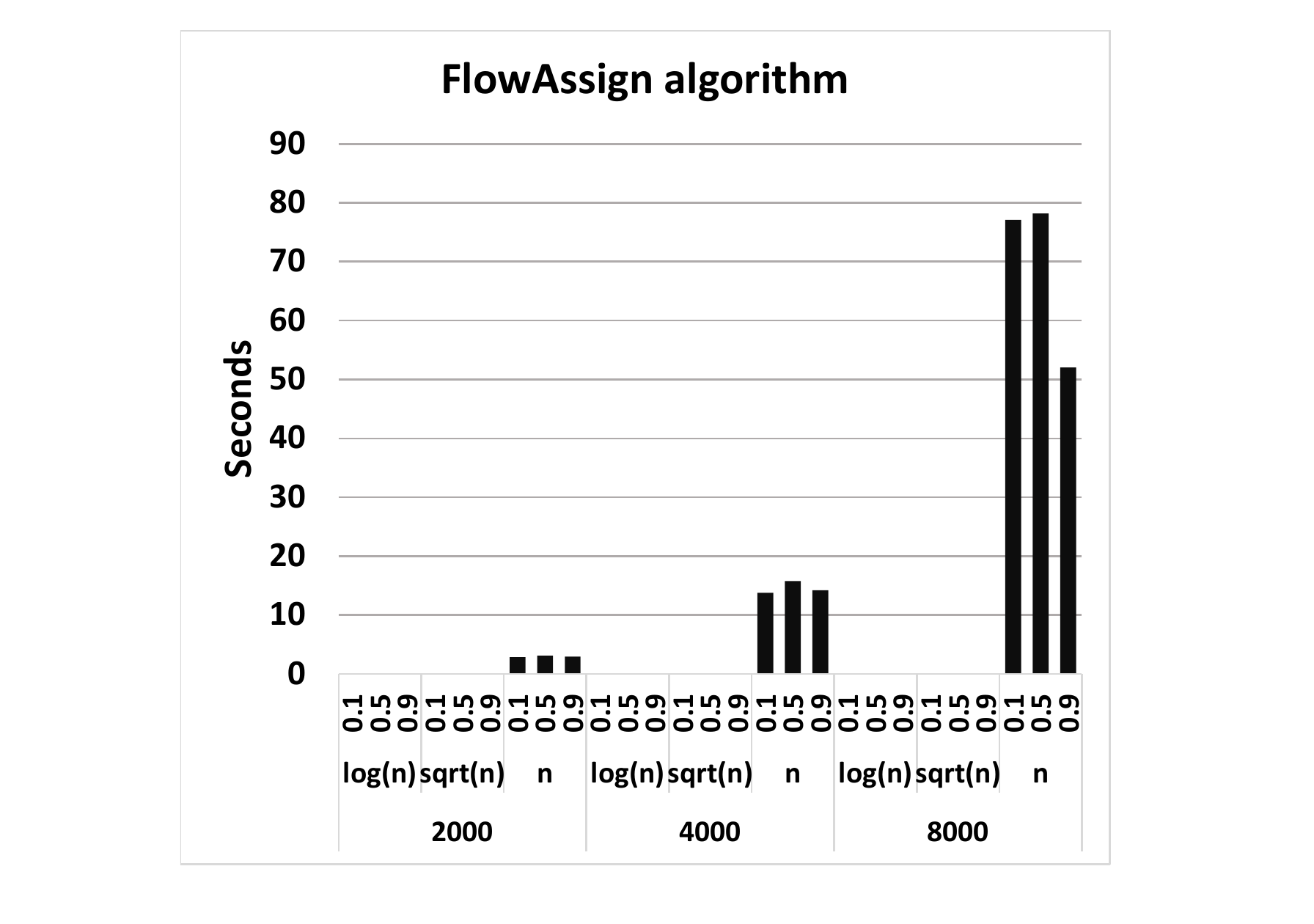} \\[-0.6cm]
				\multicolumn{2}{c}{\includegraphics[scale=0.45]{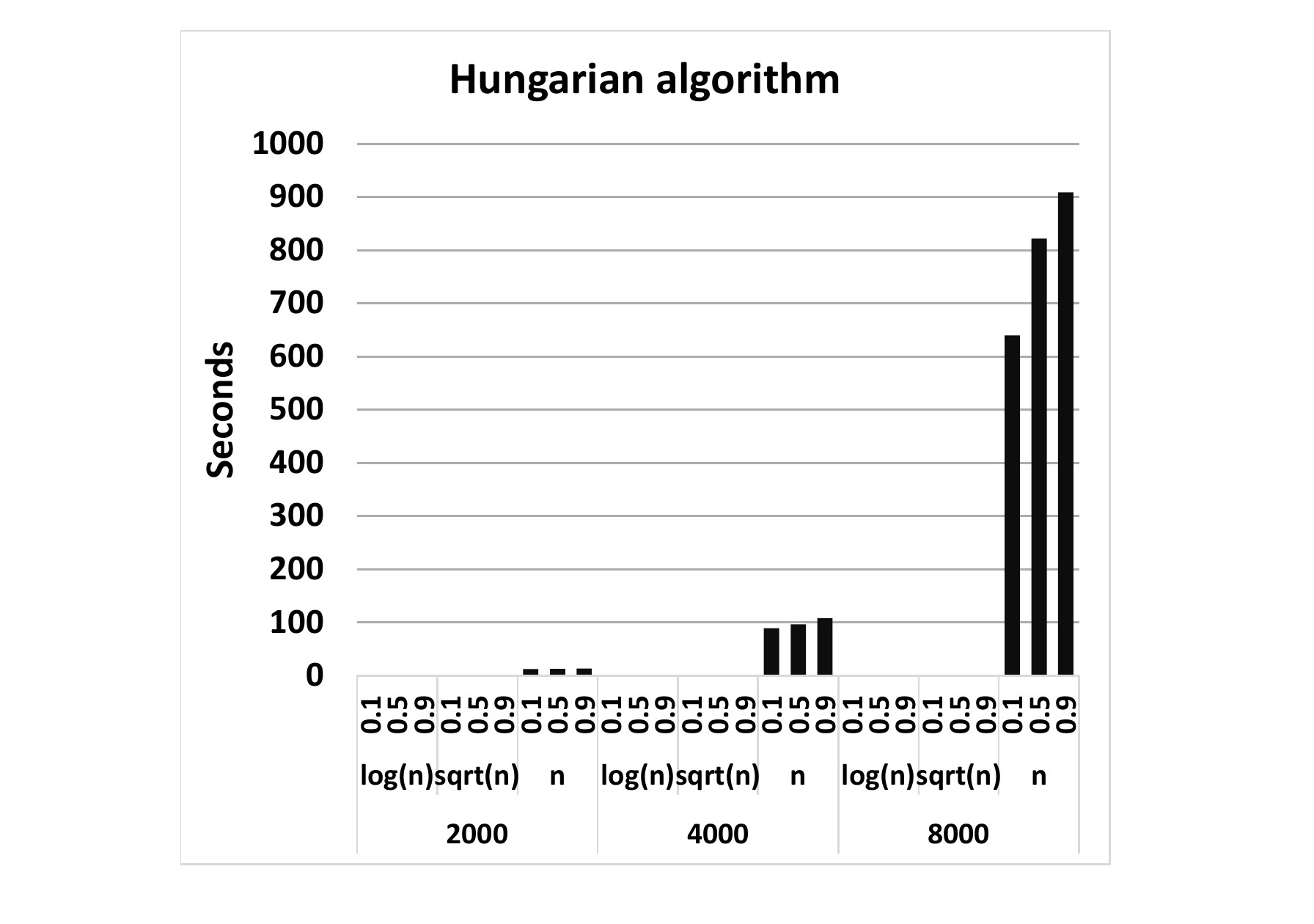}} \\
			\end{tabular}
		\end{center}
		\caption{Sensitivity analysis of the portion of low cost edges for the uniform-low-high-weights costs model.}
		\label{sensitivity_p_uloh}
	\end{figure}
	
	\begin{figure}[h]
		\begin{center}
			\begin{tabular}{@{\hspace{-1cm}} l @{\hspace{-1.1cm}}  @{\hspace{-1.2cm}} l @{\hspace{-1cm}}}
				\includegraphics[scale=0.45]{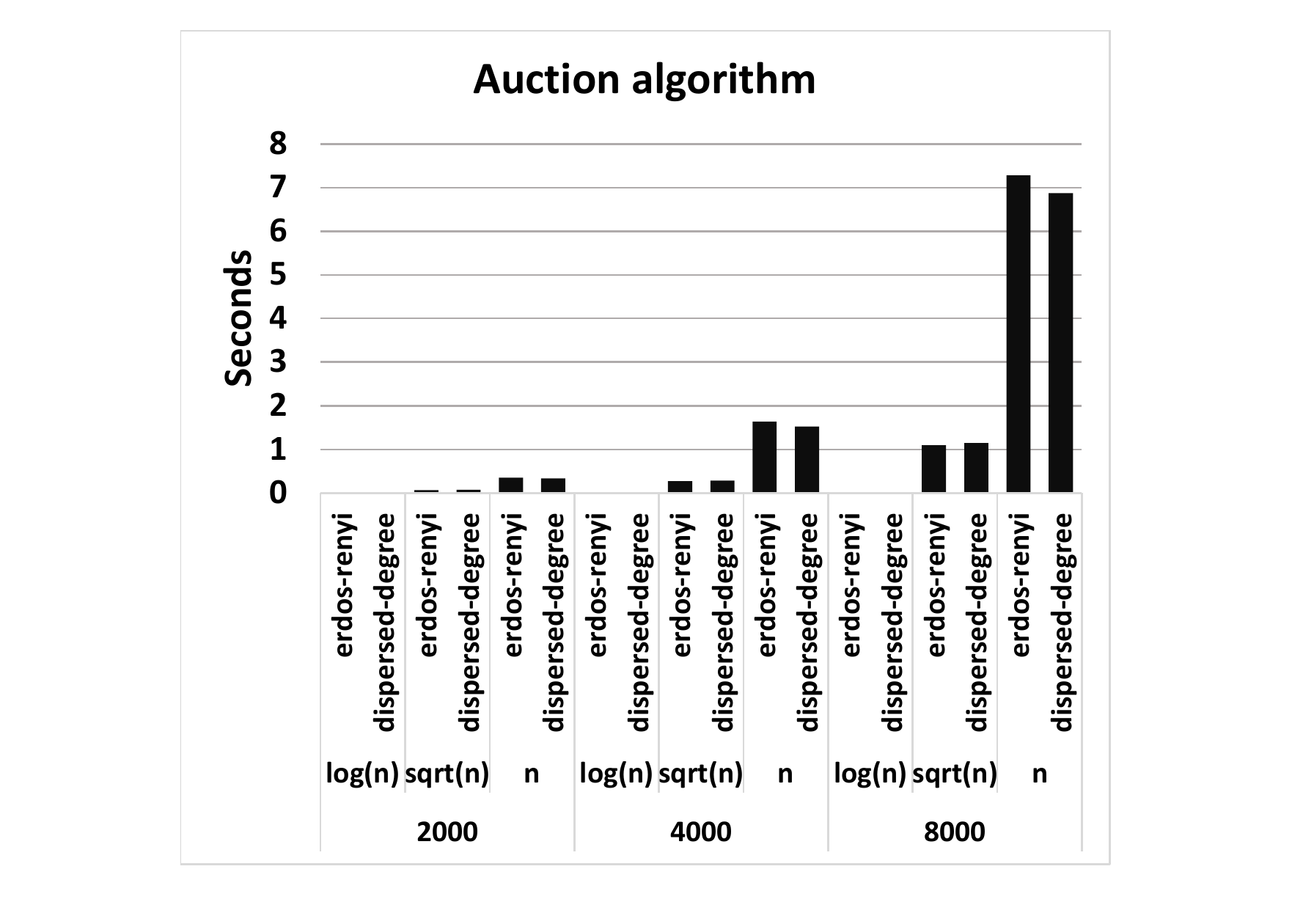} & \includegraphics[scale=0.45]{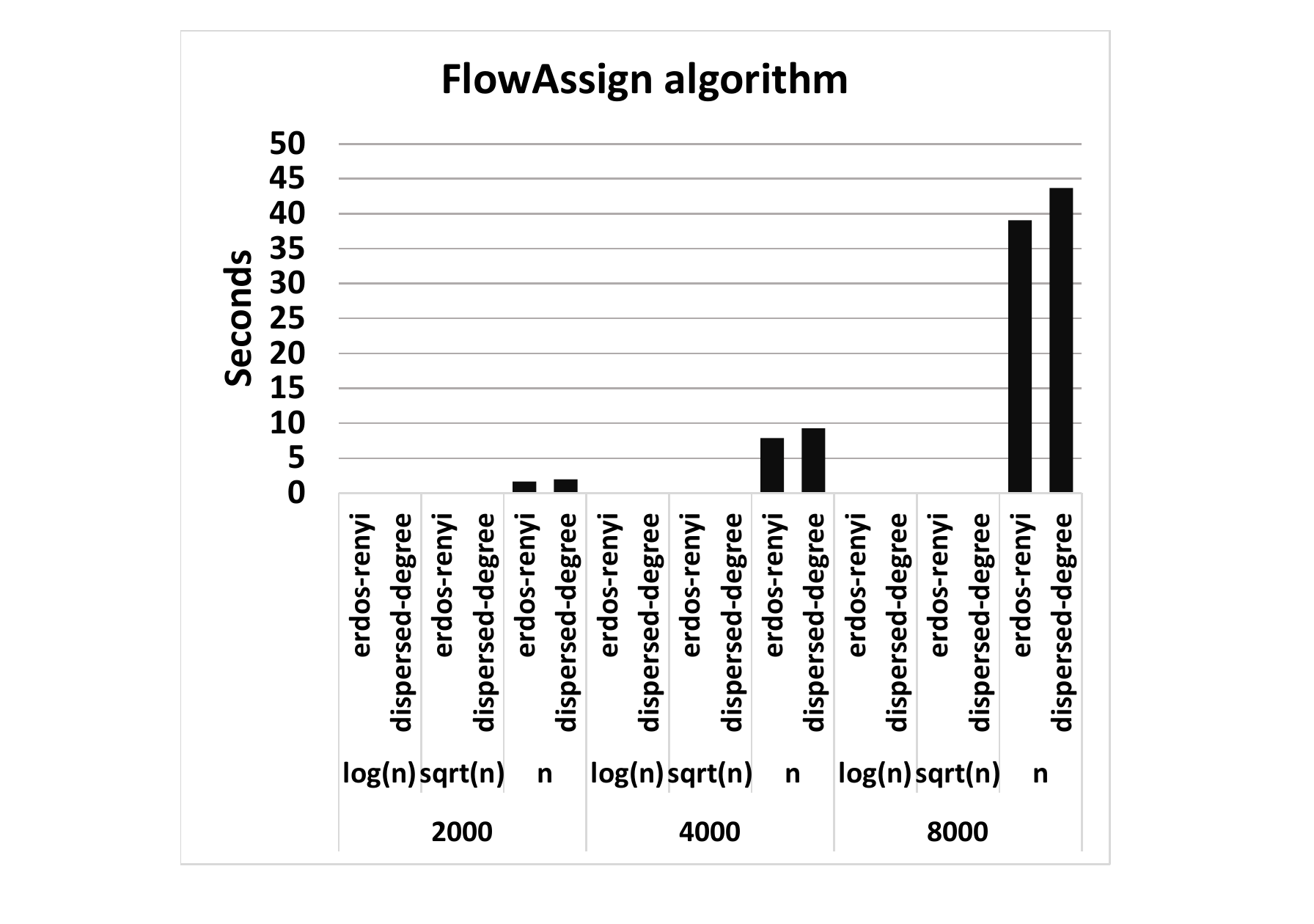} \\[-0.6cm]
				\multicolumn{2}{c}{\includegraphics[scale=0.45]{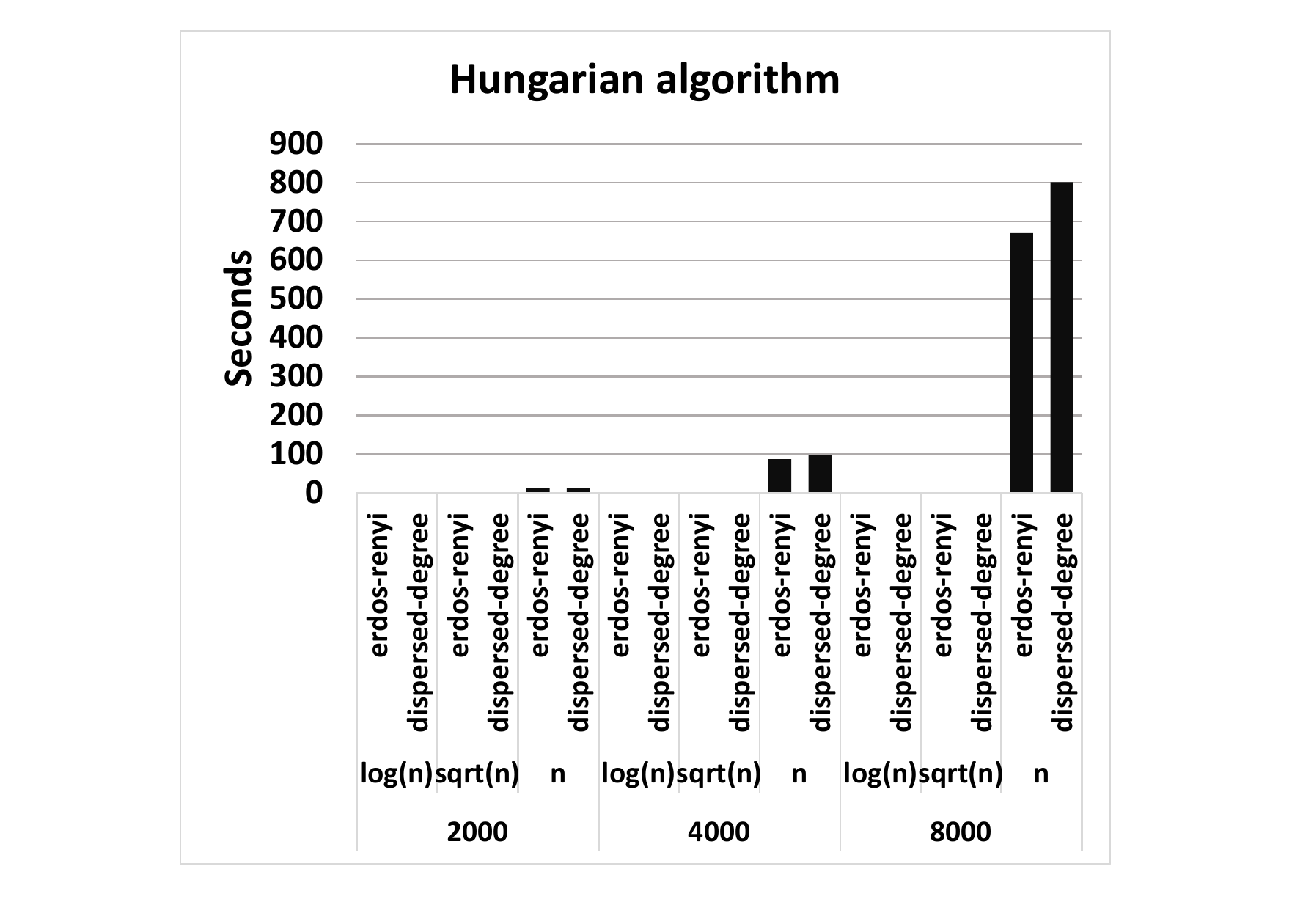}} \\
			\end{tabular}
		\end{center}
		\caption{Sensitivity analysis of the random graphs models.}
		\label{sensitivity_edges}
	\end{figure}
	
	\begin{figure}[h]
		\begin{center}
			\begin{tabular}{@{\hspace{-1cm}} l @{\hspace{-1.1cm}}  @{\hspace{-1.2cm}} l @{\hspace{-1cm}}}
				\includegraphics[scale=0.45]{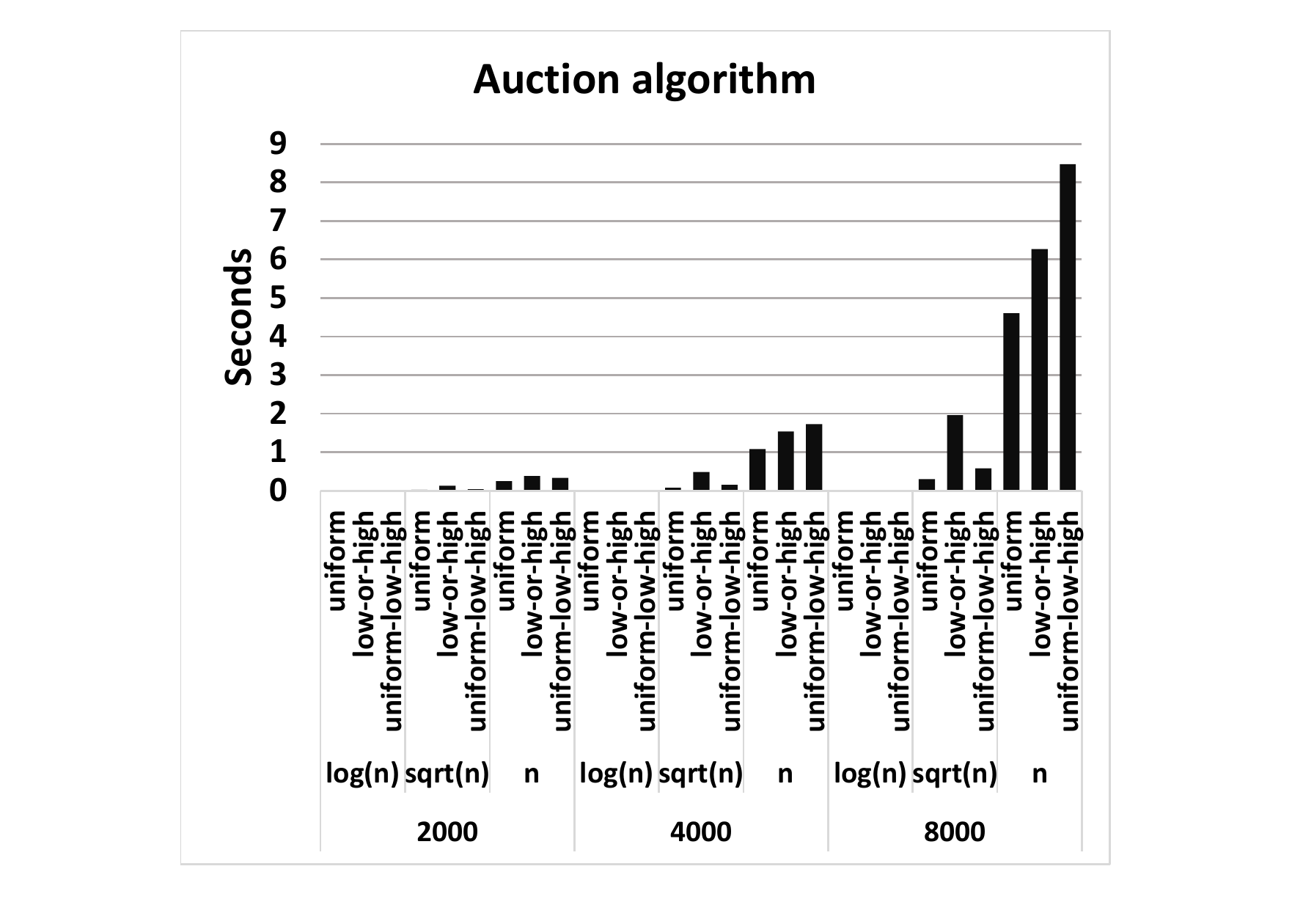} & \includegraphics[scale=0.45]{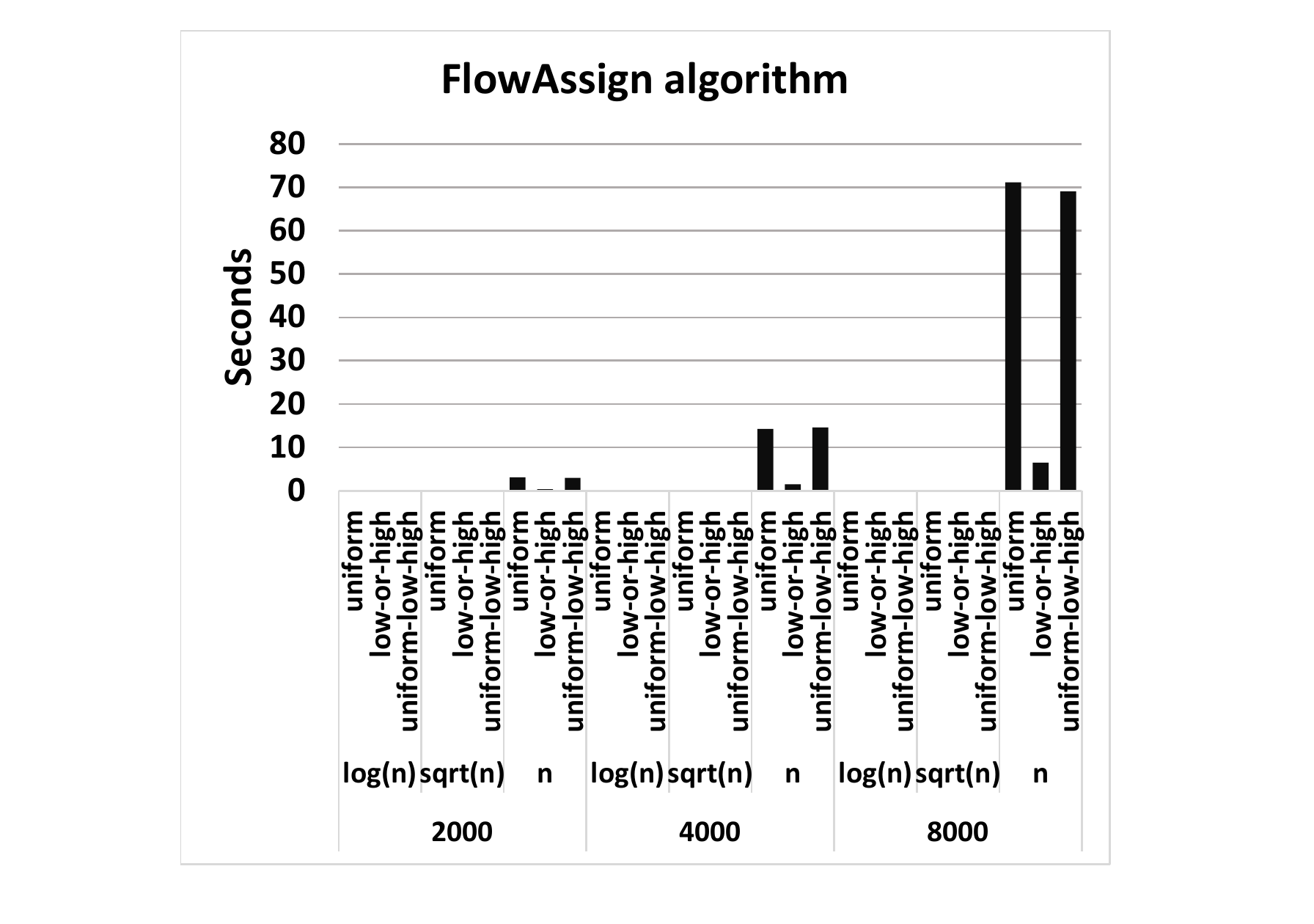} \\[-0.6cm]
				\multicolumn{2}{c}{\includegraphics[scale=0.45]{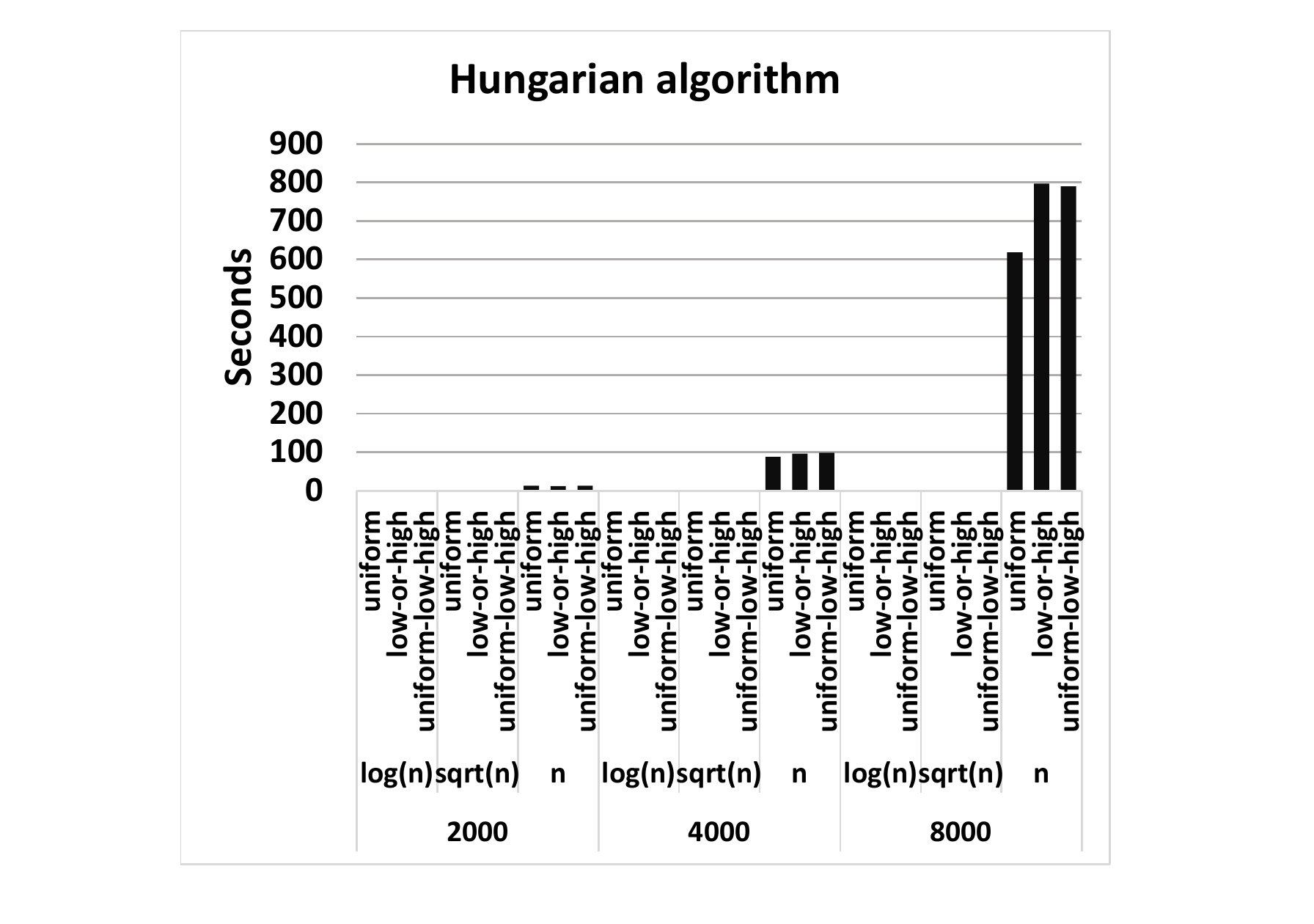}} \\
			\end{tabular}
		\end{center}
		\caption{Sensitivity analysis of the random costs models.}
		\label{sensitivity_costs}
	\end{figure}
	
	\begin{figure}[h]
		\begin{center}
			\begin{tabular}{@{\hspace{-1cm}} l @{\hspace{-1.1cm}}  @{\hspace{-1.2cm}} l @{\hspace{-1cm}}}
				\includegraphics[scale=0.45]{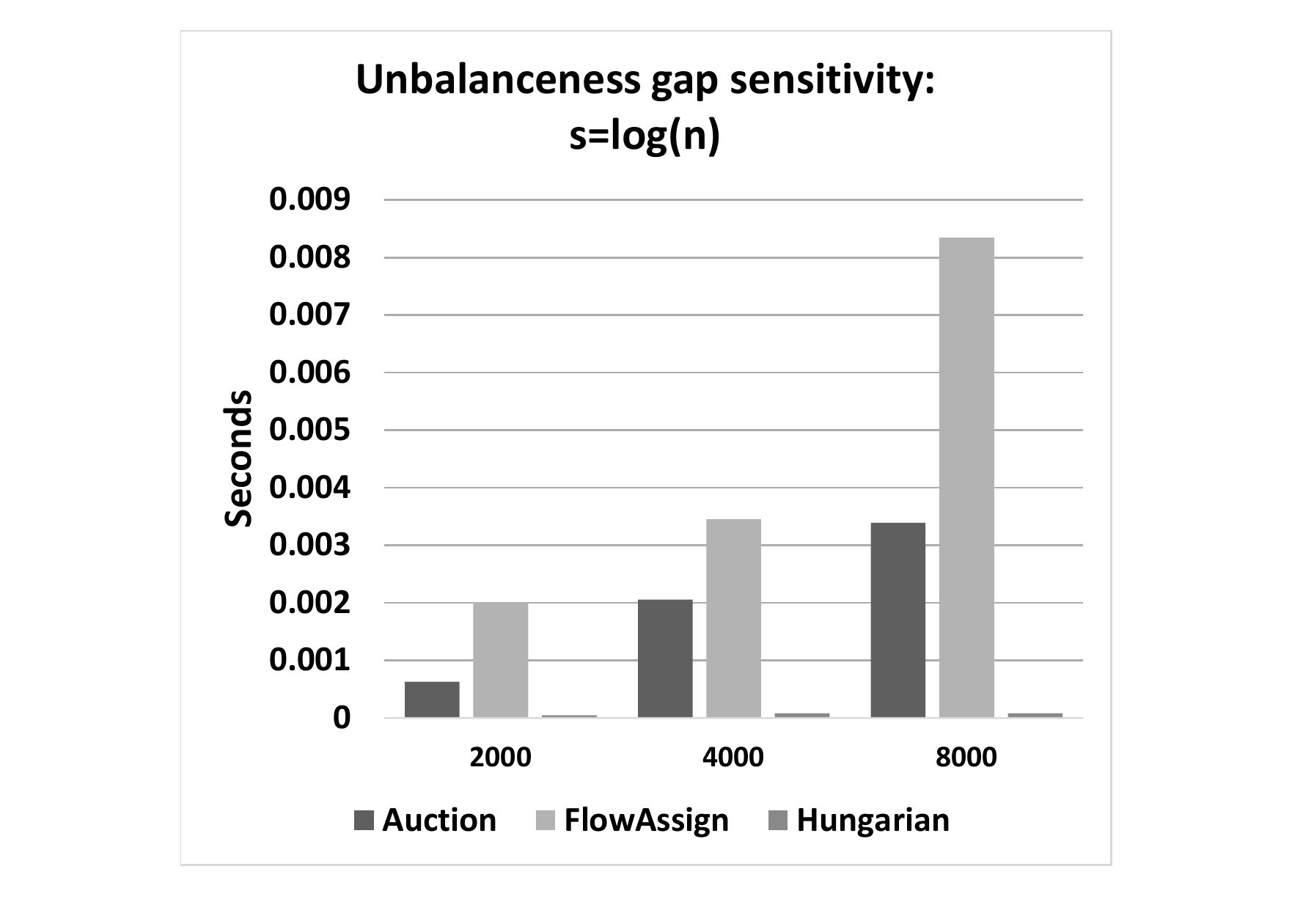} & \includegraphics[scale=0.45]{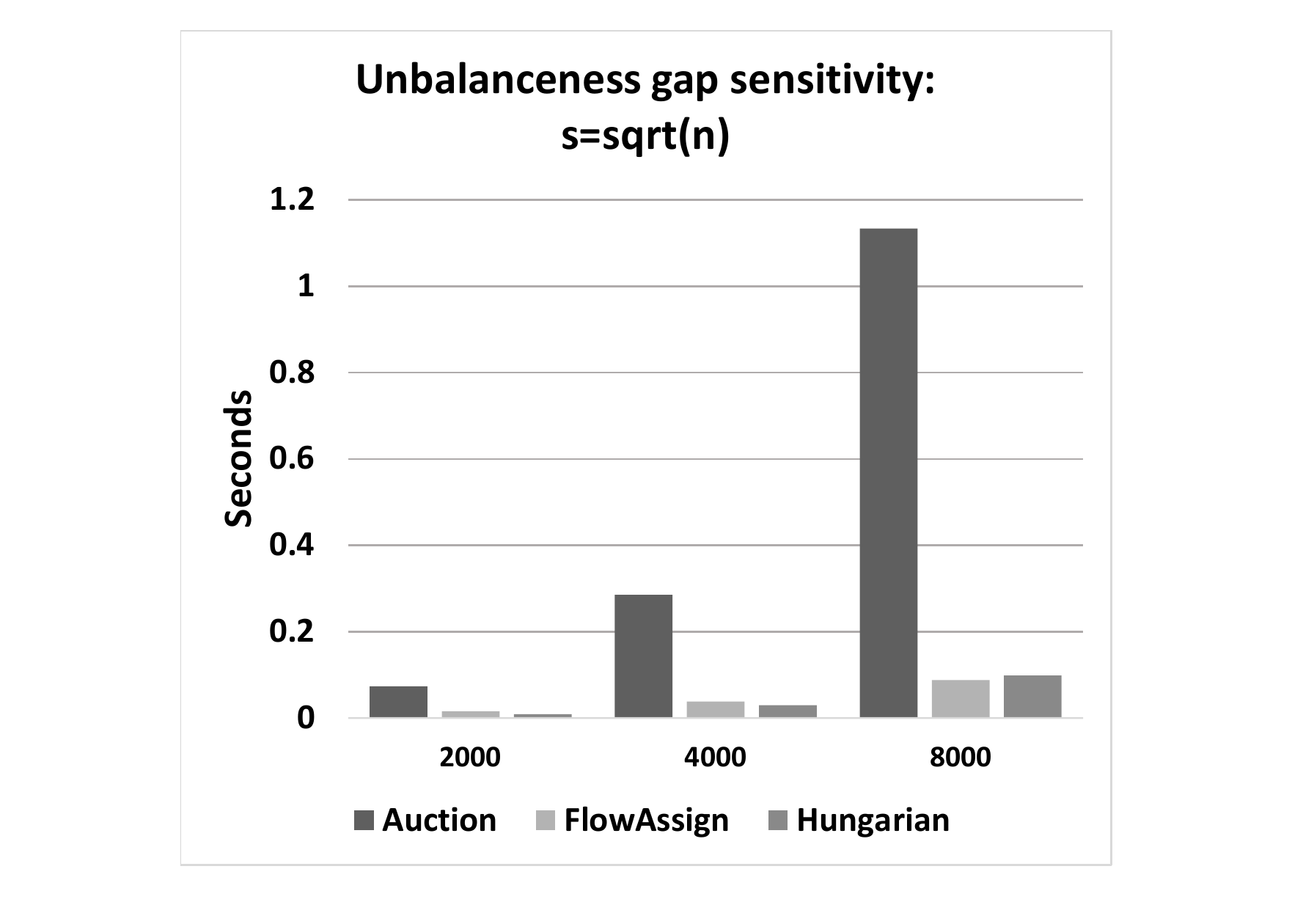} \\[-0.6cm]
				\multicolumn{2}{c}{\includegraphics[scale=0.45]{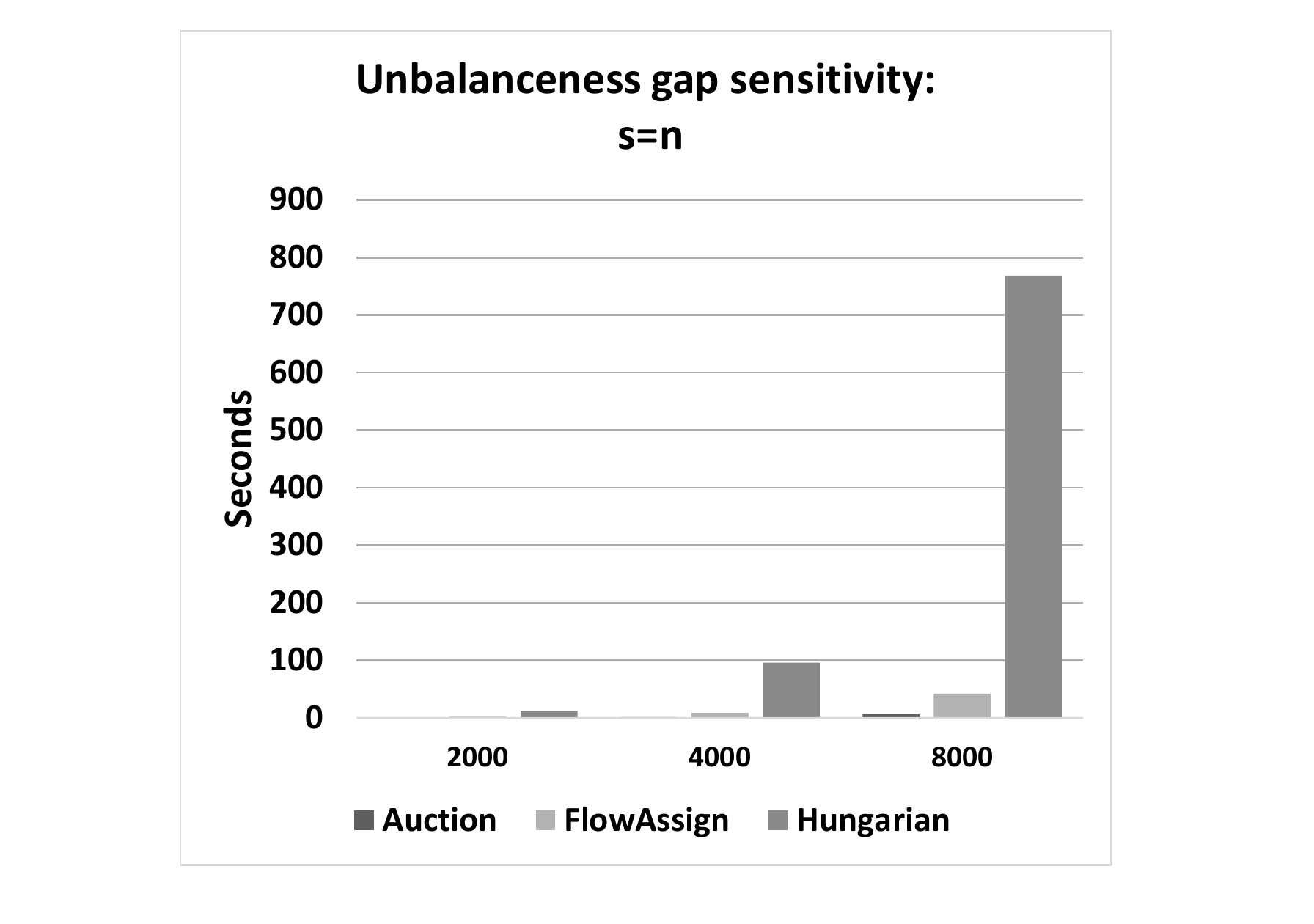}} \\
			\end{tabular}
		\end{center}
		\caption{Sensitivity analysis of the asimptoticity of $s=|V|$ respect to $n=|U|$.}
		\label{sensitivity_unbalanced}
	\end{figure}


\begin{thebibliography}{99}
		\bibitem{asym_auct1} D. P. Bertsekas, {\em The auction algorithm: A distributed relaxation method for the assignment problem}, Annals of Operations Research, 14 (1988) 105--123.
		
		\bibitem{paral_auction1} D. P. Bertsekas, D. A. Casta\~non, {\em Parallel synchronous and asynchronous implementations of the auction algorithm}, Parallel Computing, 17 (1991) 707--732.
		
		\bibitem{auctionA_Bert2} D. P. Bertsekas, {\em Linear network optimization: Algorithms and codes}, MIT Press, Cambridge, MA, 1991.
		
		\bibitem{auctionA_Bert4} D. P. Bertsekas, {\em The auction algorithm for shortest paths}, SIAM J. on Optimization, Vol. 1, 1991, pp. 425--477.
		
		\bibitem{auctionA_Bert1} D. P. Bertsekas, {\em Auction algorithms for network flow problems: A tutorial introduction}, Computational optimization and applications, Vol. 1, pp. 7--66, 1992.
		
		
		
		\bibitem{asym_auct2} D. P. Bertsekas, D. A. Casta\~non, H. Tsaknakis, {\em Reverse auction and the solution of inequality constrained assignment problems}, SIAM J. on Optimization, Vol. 3, 1993, pp. 268--299.
		
		\bibitem{run_time_auct1} D. P. Bertsekas, J. Eckstein, {\em Dual coordinate step methods for linear network flow problems}, Math. Progr., Series B, Vol. 42, 1988, pp. 203--243.
		
		\bibitem{line_opt1} D. Bertsimas, J. N. Tsitsiklis, {\em Introduction to linear optimization}, Athena Scientific, Belmont, MA, 1997.
		
		
		\bibitem{assignment_problems_book} R. Burkard, M Dell'Amico, S. Martello, {\em Assignment Problems}, Revised reprint. SIAM, Philadelphia, PA, 2011.
		
		
		
		
		\bibitem{gabo_tarj1} H. N. Gabow, R. E. Tarjan, {\em Faster scaling algorithms for network problems}, SIAM J. Computation, Vol. 18 No. 5, pp. 1013--1036, October 1989.
		
		
		\bibitem{gold_tarj1} A. V. Goldberg, R. E. Tarjan, {\em A new approach to the maximum flow problem}, Journal of the Association for Computing Machinery 35 (1988) 921--940.
		
		\bibitem{gold_proof1} A. V. Goldberg, R. E. Tarjan, {\em Finding minimum-cost circulations by successive approximation}, Mathematics of operations research 15 (1990) 430--466. 
		
		\bibitem{Gold_ken2} A. V. Goldberg, R. Kennedy, {\em An efficient cost scaling algorithm for the assignment problem}, Mathematical programming 71 (1995), 153--177.
		
		\bibitem{gold_ken_imp1} A. V. Goldberg, R. Kennedy, {\em Global price updates help}, SIAM J. Discrete Math., Vol. 10, No. 4, pp. 551--572, November 1997.
		
		
		
		
		\bibitem{hung_kuhn1} H. W. Kuhn, {\em The Hungarian method for the assignment problem}, Naval Res. Logist. Quart., 2 (1955), pp. 83--97.
		
		\bibitem{hung_kuhn2} H. W. Kuhn, {\em Variants of the Hungarian method for the assignment problem}, Naval Res. Logist. Quart., 2 (1956), pp. 253--258.
		
		\bibitem{stand_trans1} E. L. Lawler, {\em Combinatorial optimization: Networks and matroids}, Holt, Rinehart \& Winston, New York, 1976.
		
		
		\bibitem{hybr_auct1} J. B. Orlin, R. K. Ahuja, {\em New scaling algorithms for the assignment ad minimum mean cycle problems}, Mathematical programming, 54 (1992) 41--56.
		
		
		\bibitem{paral_auction2} H. Zaki, {\em A comparison of two algorithms for the assignment problem}, Computational Optimization and Applications, 4 (1995) 23--45.
		
		\bibitem{lyle_tarjan_1} L. Ramshaw and R. E. Tarjan, {\em A Weight-Scaling Algorithm for Min-Cost Imperfect Matchings in Bipartite Graphs}, 2012 IEEE 53rd Annual Symposium on Foundations of Computer Science, New Brunswick, NJ, 2012, pp. 581-590.
		
		
	\end{thebibliography}
\end{document}